\definecolor{myblue}{RGB}{0,29,119}
\newtheorem{theorem}{Theorem}[section]
\newtheorem{proposition}[theorem]{Proposition}
\newtheorem{corollary}[theorem]{Corollary}
\newtheorem{lemma}[theorem]{Lemma}
\theoremstyle{definition}
\newtheorem{definition}[theorem]{Definition}
\newtheorem{remark}[theorem]{Remark}
\newtheorem{problem}[theorem]{Problem}
\newtheorem*{theorem*}{Theorem}
\newcommand{\Zz}{{\mathbb Z}} 
\newcommand{\Rr}{{\mathbb R}} 
\newcommand{\cA}{{\mathcal A}}
\newcommand{\cL}{{\mathcal L}}
\newcommand{\cR}{{\mathcal R}}
\DeclareMathOperator{\Hom}{Hom}
\DeclareMathOperator{\ext}{Ext}
\providecommand{\AMS}{$\mathcal{A}$\kern-.1667em%
\lower.25em\hbox{$\mathcal{M}$}\kern-.125em$\mathcal{S}$}
\begin{document}

\let\thefootnote\relax\footnotetext{MSC 2010:16G20, 16D90\\ Keywords: Exceptional sequences, Nakayama algebras, perpendicular modules \\
Contact: \href{mailto:emre-sen@uiowa.edu}{emre-sen@uiowa.edu} }

\title[Weak Exceptional Sequences ]{Weak exceptional sequences }
\begin{abstract}
We introduce weak exceptional sequence of modules which can be viewed as another modification of the standard case, different than the works of Igusa-Todorov \cite{Igusa-Todorov} and Buan-Marsh \cite{Buan-Marsh}. For hereditary algebras it is equivalent to standard exceptional sequences. One important new feature is: if global dimension of algebra is greater than one, then the size of the full sequence can exceed the rank of the algebra. We use both cyclic and linear Nakayama algebras to test combinatorial aspects of this new sequence. For some particular classes we give closed form formulas which returns the number of the full weak exceptional sequences, and compare them with the number of exceptional sequences of types $\mathbb{A}$ and linear radical square zero Nakayama algebras \cite{sen19}.
\end{abstract}
 
\author{Emre SEN}
\maketitle
\tableofcontents
\section{Introduction}
An indecomposable representation $M$ of quiver $Q=(Q_0,Q_1)$ is called exceptional if $\Hom_{kQ}(M,M)\cong k$ and $\ext^1_{kQ}(M,M)=0$ where $kQ$ is path algebra. Sequence of representations $(M_1,\ldots,M_r)$ is called exceptional if every representation $M_i$, $M_j$   in the sequence satisfies  $\Hom_{kQ}(M_j,M_i)=0$ and $\ext^1_{kQ}(M_j,M_i)=0$ where $1\leq i<j\leq r$. Under this set up, Crawley-Boevey \cite{craw} and Ringel \cite{ringel1994braid} showed that there is an action of braid group on the sequences of exceptional modules. The number of exceptional sequences for Dynkin types were calculated by Seidel \cite{seidel} and their combinatorial descriptions from different aspects were studied in the papers \cite{ringel}, \cite{igusa}. 
In the literature, there are different directions involving exceptional sequences:
\begin{enumerate}[label=\roman*)]
\item \label{standar} Instead of path algebra, one can study exceptional modules and exceptional sequences over any finite dimensional algebra $A$ by setting all higher extensions to zero i.e. module $M$ is said to be exceptional if $\Hom_A(M,M)=0$ and $\ext^{r}_A(M,M)=0$ for all $r$. Indeed this is the standard definition of exceptional module and it was introduced by Rudakov for vector bundles, and studied by his school \cite{rudakov}.

\item Exceptional objects can be defined over derived categories, definition is extended simply  by considering shifted modules. For example, the number of exceptional sequences of derived $\mathbb{A}$ type is studied in \cite{araya} and related to the Catalan combinatorics.
\item In \cite{Igusa-Todorov}, Igusa and Todorov introduced signed exceptional sequences for hereditary algebras and proved that there is a bijection between the set of ordered cluster tilting sets and the set of complete signed exceptional sequences. Later, Buan and Marsh introduced $\tau$-exceptional sequences \cite{Buan-Marsh} which works for any finite dimensional algebra and gives the same result of \cite{Igusa-Todorov} if algebra is hereditary.

\end{enumerate}

In this work we consider another direction:  we will modify actual definition of an exceptional module and exceptional sequence in the following manner: 
\begin{definition} Let $A$ be a finite dimensional algebra.
\begin{itemize}
\item A module $M$ in mod-$A$ is called \emph{weak exceptional} if  
\begin{align}\label{def1}
\Hom_A(M,M)\cong k\hspace{1cm}\text{and}\hspace{1cm} \ext^1_A(M,M)=0
\end{align}
\item We call a pair of modules $(M,N)$ exceptional if both $M,N$ are weak exceptional together with :
\begin{align}\label{def2}
\Hom_A(N,M)=0,\hspace{2cm} \ext^1_A(N,M)=0
\end{align}
i.e. $M$ is right perpendicular to $N$ as in \cite{lenzig} and \cite{ringel1994braid}.
\item A k-tuple of modules $(M_1,M_2,\ldots,M_k)$ is called \emph{weak exceptional collection} or \emph{weak exceptional sequence} if $(M_i,M_j)$ is a weak exceptional pair for all $1\leq i<j\leq k$.
\item By \emph{size} of an exceptional collection, we mean the number of distinct modules in it. It is called \emph{full}  if size of the given exceptional collection is equal to:
\begin{align} \label{size}
\sup\left\{size(E)\,\vert\,\, E\,\, \text{is weak exceptional sequence of}\,A\right\}
\end{align}

\end{itemize}
\end{definition}

 The need is because of the following: in the case of hereditary algebras, higher extension are already trivial and study of exceptional sequences is closely related to study of perpendicular module categories \cite{craw}, \cite{ringel1994braid}. However, even in the case of self injective algebras, modules have infinite projective dimensions, therefore they have nontrivial higher self extensions. Moreover, there might not be an exceptional module in the sense of \ref{standar} by proposition \ref{weakvs}.

\begin{remark}\begin{enumerate}[label=\arabic*)]\item By exceptional sequence of modules, we mean exceptional sequences of isomorphism classes of modules.
\item Notice that we allow higher extensions in definitions \ref{def1}, \ref{def2} i.e. $\ext^i_A(M,N)$, $i\geq 2$ can be nontrivial. Right and left perpendicular pairs of modules over finite dimensional algebras were considered in \cite{perp} which uses the similar conditions. Here, instead of pairs, we consider the full sequences.
\item In the hereditary case, size of a full collection is equal to the number of simple modules. However, we will construct weak exceptional collections such that size of the full collection will be greater than the number of the simple modules. Therefore, we pose the following question:
\begin{problem}\label{question} What is the size of the full weak exceptional collection for algebras of finite representation type excluding hereditary case? And then, what is the number of the full weak exceptional sequences?
\end{problem}
\end{enumerate}
\end{remark}

We give partial answer to the question. Before stating the results, we use the following notation:
$\Lambda^k_n$ denotes selfinjective Nakayama algebra over $n\geq 2$ vertices and length of each indecomposable projective is $k$, where $k\geq 2$. Orientation is given by $\eta_i:i\mapsto i+1$ for $1\leq i\leq n-1$ and $\eta_n:n\mapsto 1$ where $1\leq i\leq n$ are vertices. 
We use the notation:
\begin{gather*}
 s\left(\Lambda^k_n\right)=\sup\left\{size(E)\,\vert\,\,E\,\, \text{is weak exceptional collection of}\,\,\Lambda^k_n\right\}\\
 \#(\Lambda^k_n)=\left|\left\{E\mid E\,\, \text{is the full weak exceptional collection of}\,\,\Lambda^k_n \right\}\right|
\end{gather*}
i.e. it is the number of distinct full weak exceptional collections.

Here we list main results of the paper:
\begin{theorem}\label{thm1}
For the cyclic Nakayama algebra $\Lambda^n_n$, the size of the full collection $s\left(\Lambda^n_n\right)$ is $n$, and the number of distinct full exceptional sequences $\#(\Lambda^n_n)$ is $n^n$.
\end{theorem}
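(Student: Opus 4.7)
The plan is to reduce the theorem to a combinatorial problem on $\mathbb{Z}/n\times\{1,\ldots,n\}$, the index set for the $n^2$ indecomposables $M(a,s)$ of $\Lambda^n_n$ (top $a$, length $s$). First I would establish closed-form criteria for WE pairs. By counting matching quotient-submodule pairs in the uniserial chain, $\Hom(M(b,t),M(a,s))\neq 0$ if and only if $(b-a)\bmod n\in\{\max(0,s-t),\ldots,s-1\}$. For $\ext^1$, applying $\Hom(-,M)$ to the syzygy sequence $0\to\Omega N\to P(N)\to N\to 0$ (with $\Omega M(b,t)=M(b+t,n-t)$) and identifying which maps $\Omega N\to M$ fail to lift to $P(N)$ gives a second interval condition on $(b-a)\bmod n$. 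Together these determine when an ordered tuple of indecomposables is a WE sequence.

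For the size bound $s(\Lambda^n_n)\le n$, the crucial elementary observation is that two distinct projectives cannot coexist in any WE sequence: for every $a,b$ both $\Hom(P_a,P_b)$ and $\Hom(P_b,P_a)$ are nonzero, so both orderings are forbidden. Hence every WE sequence contains at most one projective module. To bound the non-projective portion by $n-1$ I would track the cyclic arcs $[a_i,a_i+s_i-1]\subset\mathbb{Z}/n$ carved out by the modules and use the Ext criterion to force a suitable invariant (such as the number of socle labels in $\mathbb{Z}/n$ still available for subsequent terms) to behave strictly monotonically along the sequence; this monotonicity step is, I anticipate, the main technical obstacle of the proof. Sharpness is immediate from the explicit example $(P_a,S_a,S_{a+1},\ldots,S_{a+n-2})$, which the criteria of the first paragraph directly verify as WE of size $n$.

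For the count $n^n$, the previous step implies that every full WE sequence contains exactly one projective $P_a$, occurring at some position $i\in\{1,\ldots,n\}$. I would stratify by the label $a$ and enumerate the full WE sequences through a fixed $P_a$ by analysing the perpendicular subcategory $P_a^\perp$; restricted to non-projective indecomposables, $P_a^\perp$ can be naturally identified with the module category of a linear Nakayama algebra on $n-1$ vertices, whose exceptional combinatorics is far more tractable. I expect the per-$P_a$ count of full WE sequences to equal $n^{n-1}$, giving the grand total $n\cdot n^{n-1}=n^n$. The cleanest route to this equality would be a bijection between full sequences through $P_a$ and functions $[n]\to[n]$ (or equivalently, rooted Cayley trees on $n$ vertices); as a fallback, a direct recursion on the linear-Nakayama pieces of $P_a^\perp$, or a transfer-matrix computation within the arc combinatorics, should close the count.
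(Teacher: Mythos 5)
Your outline shares the paper's skeleton (at most one projective because $\Hom(P_i,P_j)$ and $\Hom(P_j,P_i)$ are both nonzero; reduce the rest to type-$\mathbb{A}$ combinatorics; multiply by the $n$ choices of projective), and your sharpness example $(P_a,S_a,\ldots,S_{a+n-2})$ does verify $s(\Lambda^n_n)\ge n$. But the two quantitative steps that actually constitute the theorem are not proved. First, the upper bound: you never establish that the non-projective part of a weak exceptional sequence has at most $n-1$ terms; you only propose to find ``a suitable invariant\ldots behaving strictly monotonically'' and yourself flag this as the main obstacle, so the bound $s(\Lambda^n_n)\le n$ (and hence the fact that every full collection contains exactly one projective) is left open. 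The paper closes this by an explicit Hom/Ext region computation in the AR lattice: the modules orthogonal to a fixed projective form a region isomorphic to the AR quiver of $\mathbb{A}_{n-1}$, whose complete exceptional sequences have length $n-1$, and an induction on radical powers rules out projective-free collections of size $n$.

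Second, the count. Your claim that the full sequences through a fixed $P_a$ number $n^{n-1}$ is announced (``I expect\ldots''), with candidate methods (a bijection with maps $[n]\to[n]$, a recursion, a transfer matrix) but no argument; the paper instead quotes Seidel's formula $n^{n-2}$ for complete exceptional sequences of $\mathbb{A}_{n-1}$ and shows (Proposition on adding a projective) that a projective can be inserted in any of the $n$ gaps, then uses transitivity of $\sigma$. Moreover, your reduction to a single perpendicular category $P_a^\perp$ is too coarse: modules occurring \emph{after} $P_a$ must satisfy $\Hom(X,P_a)=0$ (they avoid the socle vertex of $P_a$), while modules occurring \emph{before} $P_a$ must satisfy $\Hom(P_a,X)=0$ (they avoid the top vertex), so the two sides live in two different, $\sigma$-translated $\mathbb{A}_{n-1}$ regions and must also be compatible with each other; counting ``exceptional sequences in $P_a^\perp$'' alone would not enumerate the full sequences correctly. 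As written, therefore, both the size statement and the enumeration $n^n$ remain unproven.
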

\begin{theorem}\label{thm2}
For the cyclic Nakayama algebra $\Lambda^{n-1}_n$, the size of the full collection $s\left(\Lambda^{n-1}_n\right)$ is $2n-2$, and the number of distinct full exceptional sequences $\#(\Lambda^{n-1}_n)$ is $n$.
\end{theorem}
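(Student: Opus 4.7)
My plan is to prove Theorem \ref{thm2} in three steps: construct $n$ explicit weak exceptional sequences of size $2n-2$, prove the matching upper bound, and show these are the only full ones.

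For the construction, for each $a \in \{1, \ldots, n\}$ (indices taken mod $n$), I would define $E_a$ to be the ordered list of the $n-1$ indecomposable submodules of the projective $P_{a+2}$ (the modules with socle $S_a$), in order of increasing length, followed by the $n-1$ indecomposable quotients of $P_{a+1}$ (the modules with top $S_{a+1}$), in order of decreasing length. Verification that $E_a$ is weak exceptional reduces to three types of pairwise compatibility checks -- two modules in the first half, two in the second, or one from each -- all handled via the standard Nakayama $\Hom$ and $\ext^1$ formulas together with the syzygy rule $\Omega M_i^\ell = M_{i+\ell}^{n-1-\ell}$ (where $M_i^\ell$ denotes the indecomposable uniserial module with top $S_i$ and length $\ell$). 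The crucial observation is that when $M$ lies in the first half and $N$ in the second, the top $S_{a+1}$ of $N$ sits outside the support of $M$ (which ends at vertex $a$), forcing $\Hom(N, M) = 0$, and the $\underline{\Hom}$-map computing $\ext^1(N, M)$ factors through the projective cover $P_{a+1}$ of $N$.

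For the upper bound, I would establish two structural lemmas: (i) at most two projective modules appear in any weak exceptional sequence, since $(P_i, P_j)$ is weak exceptional if and only if $j \equiv i - 1 \pmod n$, so three projectives would impose two incompatible cyclic conditions; and (ii) at most $n-1$ simples appear, since the simples form a directed $n$-cycle under the weak exceptional pair relation coming from $\ext^1(S_j, S_i) \neq 0$ iff $i \equiv j+1 \pmod n$. A case analysis on the number of simples then rules out any weak exceptional collection of size exceeding $2n-2$. For uniqueness, one argues that a collection of size $2n-2$ must contain exactly two simples forming an adjacent pair $(S_a, S_{a+1})$ and exactly two adjacent projectives $(P_{a+2}, P_{a+1})$; the remaining $2n-6$ entries are then forced by the pairwise $\Hom$/$\ext^1$ constraints to coincide with the intermediate-length modules of $E_a$. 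The rotational $\Zz/n$-symmetry of $\Lambda^{n-1}_n$ permutes $\{E_a\}_{a=1}^n$ freely (each $E_a$ is distinguished by its leading simple $S_a$), yielding $\#(\Lambda^{n-1}_n) = n$.

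The main obstacle is the uniqueness step: once the two simples and two projectives of a size-$(2n-2)$ sequence have been pinned down, showing that the remaining entries must precisely match those of $E_a$ requires careful bookkeeping of $\Hom$/$\ext^1$ compatibilities across all intermediate-length indecomposables $M_i^\ell$ with $2 \le \ell \le n-2$. By contrast, the construction and the two structural bounds underlying the upper bound reduce to comparatively routine applications of the Nakayama $\Hom$/$\ext^1$ formulas.
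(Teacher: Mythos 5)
Your construction is the right one: the sequences $E_a$ (all submodules of $P_{a+2}$ in increasing length, followed by all quotients of $P_{a+1}$ in decreasing length) are exactly the $\sigma$-translates of the paper's unique full collection $(E,F)$ of Proposition \ref{keyprop2}, and your two structural lemmas (at most two projectives, since $(P_i,P_j)$ is a weak exceptional pair iff $j\equiv i-1$; at most $n-1$ simples, since the simples form a directed cycle under the $\ext^1$ relation) are correct. The genuine gap is the upper bound. Those two lemmas only constrain the modules of length $1$ and length $n-1$; they say nothing about the $n(n-3)$ indecomposables of intermediate length $2,\dots,n-2$, and a ``case analysis on the number of simples'' cannot by itself cap the total size at $2n-2$ --- indeed, even adding a per-length bound (two modules of each length $\ell\ge 2$ can easily coexist) gives roughly $3n$, not $2n-2$. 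The bound requires controlling interactions \emph{across} lengths; the paper does this by a different route, reducing to Proposition \ref{keyprop1} (a full collection of $\Lambda^n_n$ contains exactly one projective, the rest being a type $\mathbb{A}_{n-1}$ sequence via Proposition \ref{atype}) and splitting the lattice $\cL(\Lambda^{n-1}_n)$ into a type $\mathbb{A}_{n-1}$ region $a+b\le n-2$ contributing at most $n-1$ modules plus the diagonal $a+b=n-1$ contributing at most $n-1$ more. Nothing playing the role of this decomposition appears in your proposal.

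The uniqueness step has the same problem: the assertion that any collection of size $2n-2$ contains exactly two adjacent simples and two adjacent projectives, with the remaining $2n-6$ entries forced to be those of $E_a$, is precisely the content of the theorem (the paper's Proposition \ref{keyprop2}, which characterizes the collections compatible with the diagonal $F$), and you leave it as acknowledged ``bookkeeping'' without an argument. So what is actually proved in the proposal is the lower bound $s(\Lambda^{n-1}_n)\ge 2n-2$ and the count $\ge n$; both the matching upper bound and the exact count $\#(\Lambda^{n-1}_n)=n$ still need an argument, for which the paper's reduction to $\Lambda^n_n$ and type $\mathbb{A}_{n-1}$ sequences (or some substitute controlling the intermediate-length modules) is essential.
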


\begin{theorem}\label{thm3}
For the cyclic Nakayama algebras $\Lambda^2_n$, we have:\\
\begin{center}
$s(\Lambda^2_n)=\begin{cases}
3k+1& \text{if}\quad n=2k+1\\
3k-1& \text{if}\quad n=2k\\
\end{cases}
$\\\vspace{0.5cm}
$\#(\Lambda^2_n)=\begin{cases}
2k+1& \text{if}\quad n=2k+1\\
2k\left(\cfrac{8^k}{12}-\cfrac{(-1)^k}{3}+1\right)& \text{if}\quad n=2k\\
\end{cases}
$
\end{center}
\end{theorem}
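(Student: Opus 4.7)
The plan is to translate full weak exceptional sequences into linear extensions of a forced-precedence digraph on the indecomposable modules, classify the maximal admissible subsets, and evaluate the resulting sum with a roots-of-unity filter. Since $\Lambda^2_n$ is self-injective and radical square zero, its $2n$ indecomposables are the simples $S_i$ and the projective-injectives $P_i$ ($i\in\mathbb{Z}/n$), all of which are weak exceptional. A direct computation of $\Hom$ and $\ext^1$, using the minimal resolution $P_{i+1}\to P_i\to S_i\to 0$ together with self-injectivity (so $\ext^1(-,P_j)=0$), shows that the only forbidden ordered pairs $(M,N)$ of distinct indecomposables are $(S_{i+1},S_i)$, $(P_i,S_{i+1})$, $(S_i,P_i)$, and $(P_i,P_{i+1})$; every other pair is allowed in at least one order. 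The arrows of the \emph{forced-precedence digraph} $G$ on $\{S_i,P_i\}$ are therefore
\[S_i\to S_{i+1},\quad P_{i+1}\to P_i,\quad S_{i+1}\to P_i,\quad P_i\to S_i,\]
and a full weak exceptional sequence is precisely a linear extension of the subgraph of $G$ induced on its underlying set of modules.

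Next I would classify the directed cycles of $G$. Tracking the index shift under each arrow type ($S\to S$: $+1$, $P\to P$: $-1$, $S\to P$: $-1$, $P\to S$: $0$) shows that every closed walk containing an $S\to P$ arrow also contains a triangle $S_i\to S_{i+1}\to P_i\to S_i$, while cycles using only $S\to S$ or only $P\to P$ arrows must be the full simple $n$-cycle or the full projective $n$-cycle. Hence a subset $A\subseteq\{S_i,P_i\}$ supports a linear extension of the induced DAG if and only if (a) $\{S_i,S_{i+1},P_i\}\not\subseteq A$ for every $i$, (b) not all simples lie in $A$, and (c) not all projectives lie in $A$. Writing $x=|A\cap\{S_i\}|$ and $y=|A\cap\{P_i\}|$, summing (a) over $i$ gives $2x+y\le 2n$, while (b), (c) give $x,y\le n-1$. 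Maximising $x+y$ under these linear constraints yields $s(\Lambda^2_n)=\lfloor(3n-1)/2\rfloor$, which is $3k+1$ when $n=2k+1$ and $3k-1$ when $n=2k$, matching the stated sizes.

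For the count, I would observe that for odd $n$ the maximum forces $y=n-1$ (one missing projective $P_{i_0}$, $n$ choices), and the simples form the unique maximum independent set on the odd-length path obtained by breaking the simple cycle at $i_0$. The induced DAG is then a chain: each simple $S_j$ with $j\ne i_0,i_0+1$ is forced into the unique slot between $P_j$ and $P_{j-1}$ along the projective chain $P_{i_0-1}\to\cdots\to P_{i_0+1}$, and the boundary simples $S_{i_0},S_{i_0+1}$, when present, are pinned to its two ends. Each subset thus contributes one linear extension, giving $\#(\Lambda^2_{2k+1})=n=2k+1$. For even $n$, the maxima split into \textbf{Case A} (one missing projective; $n(k+1)$ subsets, each a chain DAG with one linear extension) and \textbf{Case B} (two missing projectives $P_{i_1},P_{i_2}$ at odd cyclic distance $a=|i_2-i_1|$; there are $k^2$ such pairs). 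An index-parity check shows that in Case B the induced DAG decomposes into two arc-chains of lengths $L_\alpha=(3a-1)/2$ and $L_\beta=(3(n-a)-1)/2$ joined by exactly two cross arrows $S_{i_1}\to S_{i_1+1}$ and $S_{i_2}\to S_{i_2+1}$ (no inter-arc $S\to P$ or $P\to S$ arrow exists). By inclusion-exclusion on the $\binom{L_\alpha+L_\beta}{L_\alpha}$ free interleavings, exactly the two extreme interleavings (one chain entirely before the other) violate a cross edge, giving $\binom{3k-1}{(3a-1)/2}-2$ linear extensions per Case-B subset.

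Summing all contributions yields $\#(\Lambda^2_{2k})=n(k+1)+kS_1-2k^2=2k+kS_1$, where $S_1=\sum_{j\equiv 1\pmod 3,\,1\le j\le 3k-1}\binom{3k-1}{j}$ arises after collecting each unordered Case-B pair once and exploiting the symmetry $\binom{3k-1}{(3a-1)/2}=\binom{3k-1}{(3(n-a)-1)/2}$. The standard roots-of-unity filter with $\omega=e^{2\pi i/3}$ and the identities $1+\omega=-\omega^2$, $1+\omega^2=-\omega$ gives $(1+\omega)^{3k-1}=-(-1)^k\omega$ and its complex conjugate, whence
\[S_1=\frac{1}{3}\bigl(2^{3k-1}-2(-1)^k\bigr)=\frac{8^k}{6}-\frac{2(-1)^k}{3},\]
and substitution produces the desired closed form $\#(\Lambda^2_{2k})=2k\bigl(\frac{8^k}{12}-\frac{(-1)^k}{3}+1\bigr)$. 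The main technical obstacle is the DAG reduction in Case B: one must carefully verify that the projective chain together with the intra-arc $S\to P$ and $P\to S$ arrows collapses into a single chain of length $(3a-1)/2$ on each arc, and that the only inter-arc relations are the two cross simple arrows. Once this reduction is in place, the inclusion-exclusion count and the roots-of-unity evaluation are mechanical.
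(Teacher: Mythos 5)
Your proposal is correct and arrives at both the sizes and the counts, but by a genuinely different route from the paper. The paper works inside the AR-lattice and argues inductively with ``bones'': it glues $Y$- and $V$-type bones, shows a full bone has at most two main bodies, counts the shuffles of a two-body bone by the stars-and-bars function $c(x,y)=\binom{x+y-1}{y}-1$ after fixing the first module (the $\sigma$-transitivity then produces the outer factor $2k$), and finishes with the same roots-of-unity identity. You instead linearize the whole problem: all $2n$ indecomposables are weak exceptional, the only obstructions are the four families of forbidden ordered pairs you list (which are correct for the radical-square-zero selfinjective case, using $\ext^1(S_i,S_{i+1})\neq 0$ and injectivity of the $P_j$), so full sequences are exactly linear extensions of induced subdigraphs; the size then follows from the inequalities $2x+y\le 2n$, $x,y\le n-1$ together with explicit achieving sets, and the count splits into your Case A (one missing projective, $n(k+1)$ chain posets) and Case B (two missing projectives at odd distance, $\binom{3k-1}{(3a-1)/2}-2$ shuffles each). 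Your ``$-2$'' two-chain count is the unnormalized counterpart of the paper's $c(x,y)$ with ``$-1$'' (the paper pins the first module and multiplies by $2k$ at the end; you enumerate subsets at all rotational positions directly). I checked the two bookkeepings agree: both give $24$ for $n=4$ and $264$ for $n=6$, and your reduction to $2k+kS_1$ with $S_1=\tfrac13\bigl(2^{3k-1}-2(-1)^k\bigr)$ reproduces the stated closed form, as does the odd case $2k+1$. Your classification of maximal admissible subsets is also more explicit than the paper's induction on bones.

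One step of your cycle analysis needs repair, although the criterion you actually use is true. The assertion that every closed walk containing an $S\to P$ arrow contains a triangle $S_i\to S_{i+1}\to P_i\to S_i$ is false: for example $P_{i+1}\to S_{i+1}\to P_i\to P_{i-1}\to\cdots\to P_{i+2}\to P_{i+1}$ is a directed cycle using an $S\to P$ arrow whose vertex set contains only one simple, hence no triangle. The correct statement is: a directed cycle whose vertex set contains no triangle $\{S_i,S_{i+1},P_i\}$ either uses only $S\to S$ arrows (hence consists of all simples), or every simple on it is entered by $P_j\to S_j$ and left by $S_j\to P_{j-1}$, so the projective indices it visits decrease by one at every step and it must contain all $n$ projectives. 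Such cycles are excluded by your conditions (b) and (c), so the acyclicity criterion (a)--(c), and everything downstream of it, stands once this extra case is spelled out.
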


In the next section, we characterize Hom and Ext orthogonal modules in terms of regions in the Auslander-Reiten quiver. We give proofs of theorems \ref{thm1}, \ref{thm2}, \ref{thm3} in the latter sections \ref{ch3}, \ref{ch4} and \ref{ch5} respectively. We examine further properties of (weak) exceptional sequences in the last section \ref{last}. I am thankful to S. Zhu for many useful discussions, and to G. Todorov for stimulating interest in this work.

\section{Lattices}
In this work, to study exceptional sequences, it is convenient for us to identify Auslander-Reiten quiver of cyclic Nakayama algebras by a lattice spanned by vectors $f_1=(2,0)$ and $f_2=(1,1)$ in $\Rr^2$. 
The lattice 
\begin{align}\label{arlattice}
\cA\cR\left(\Lambda^k_n\right)=\left\{af_1+bf_2\,\vert\,\, 0\leq b\leq k-1,\,\text{and}\,a,b\in\Zz \right\}
\end{align}
can be identified with the AR quiver of $\Lambda^k_n$ and 
its fundamental domain is:
\begin{align}\label{fundamentallattice}
\cL\left(\Lambda^k_n\right)=\left\{af_1+bf_2\,\vert\,\, 0\leq b\leq k-1,\,\, 0\leq a\leq n-1,\,\text{and},\,a,b\in\Zz \right\}
\end{align}
We can identify the integral points of $\cL\left(\Lambda^k_n\right)$ by indecomposable $\Lambda^k_n$ modules via the map:
\begin{gather*}
\Gamma: \text{mod}\,\Lambda^k_n\rightarrow \cL\left(\Lambda^k_n\right)\\
M\rightarrow\left(n-\text{isoc}(M),l(M)-1\right)
\end{gather*}

where $\text{isoc}(M)$ is the index of the socle of $M$ and $l(M)$ is the length of the module $M$.  $\Gamma^{-1}$ is well defined on the fundamental domain $\cL\left(\Lambda^k_n\right)$ and injective, explicitly: the tuple $(a,b)$ gives the indecomposable module whose length is $b+1$ and index of the socle is $n-a$. Therefore, we can extend its domain in the following way:  for any point $(a,b)\in\cA\cR(\Lambda^k_n)$, we can define the inverse map $\Gamma^{-1}(a,b):=\Gamma^{-1}((a)_n,b)$ where 
$(a)_n$ is $a\!\!\mod n$.

To exploit symmetric structure of the fundamental domain, we define translation map $\sigma$ of module $M$:
\begin{align*}
\sigma(M)=\Gamma^{-1}\left(\Gamma(M)_1+1,\Gamma(M)_2\right)
\end{align*}
Notice that, for any nonprojective module $M$, $\sigma(M)$ is inverse of Auslander-Reiten translate $\tau^{-1}(M)$. If $M$ is projective module at vertex $i$, then $\sigma(P_i)\cong P_{i+1}$.

\begin{remark} Because $\Lambda^k_n$ is  self injective, for every indecomposable module $M$, there is a unique projective module $P$ such that $M\hookrightarrow P$ i.e. $M$ is submodule of $P$. Indeed $P$ is injective envelope of $M$.
\end{remark}

\begin{definition}\label{region} Let $p_1,p_2,p_3$ and $p_4$ be integral points of the lattice $\cA\cR(\Lambda^k_n)$. We denote the trapezoidal region bounded by the lines $\overline{p_1p_2}$, $\overline{p_2p_3}$, $\overline{p_3p_4}$ and $\overline{p_4p_1}$ by $\overline{p_1p_2p_3p_4}$.
\end{definition}
It is obvious that shape of regions in $\cA\cR(\Lambda^k_n)$ are translation $\sigma$-invariants. 

\begin{proposition}\label{homextortho} Let $M$ be $\Lambda^k_n$ module $k\leq n$, where $\Gamma(M)=(a,b)$. Then:
\begin{align*}
\Hom(M,X)\neq 0 &\iff X=\Gamma^{-1}(p),\, p\in \overline{(a,b)(a+b,0)(a+b,k-1)(a,k-1)} \\
\ext(M,X)\neq 0 &\iff X=\Gamma^{-1}(p),\, p\in \overline{(a\!-\!1,b)(a\!-\!1,0)(a\!-\!k\!+\!1,k\!-\!2)(a\!+\!b\!-\!k\!+\!1,k\!-\!2)} 
\end{align*}

\end{proposition}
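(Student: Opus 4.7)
The plan is to reduce everything to the uniserial structure of indecomposable $\Lambda^k_n$-modules: each is determined by its socle index and its length, and every nonzero map between two uniserial modules is pinned down by the top of its image.

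\textbf{Hom part.} Any nonzero $\phi\colon M\to X$ factors as $M\twoheadrightarrow\phi(M)\hookrightarrow X$, so $\phi(M)$ is simultaneously a quotient of $M$ and a submodule of $X$. Uniseriality pins down both sides: quotients of $M$ are the uniserial modules sharing the top of $M$, with lengths from $1$ up to $l(M)$, while submodules of $X$ are the uniserials sharing the socle of $X$, with lengths from $1$ up to $l(X)$. Matching these forces a length $j\equiv\mathrm{isoc}(X)-\mathrm{isoc}(M)+l(M)\pmod n$ to satisfy $1\le j\le\min\{l(M),l(X)\}$. Since $k\le n$, at most one representative of this congruence class lies in the admissible range, so $\Hom(M,X)\ne 0$ reduces to an explicit system of inequalities. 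Substituting $\Gamma(M)=(a,b)$ and $\Gamma(X)=(a',b')$ yields $a\le a'\le a+b$ and $a'+b'\ge a+b$, which carves out the trapezoid $\overline{(a,b)(a+b,0)(a+b,k-1)(a,k-1)}$.

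\textbf{Ext part.} Invoke the Auslander--Reiten duality $\ext^1_A(M,X)\cong D\,\underline{\Hom}_A(X,\tau M)$ for non-projective $M$, where $\underline{\Hom}$ is taken modulo maps factoring through projectives. From the description of $\sigma=\tau^{-1}$ on non-projectives we have $\Gamma(\tau M)=(a-1,b)$. Applying the Hom criterion just established to the pair $(X,\tau M)$, the non-vanishing of $\Hom(X,\tau M)$ amounts to $(a-1,b)$ lying in the Hom-region associated with $X$, which in coordinates $(a',b')=\Gamma(X)$ reads $a'\le a-1$ together with $a'+b'\in[a-1,a+b-1]$. The passage $\Hom\to\underline{\Hom}$ then kills precisely those $X$ that are projective, i.e.\ those with $b'=k-1$ (since $\Lambda^k_n$ is selfinjective, projective equals injective, and maps from projectives into the non-projective $\tau M$ are trivial in the stable category). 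Enforcing $b'\le k-2$ cuts out exactly the parallelogram with corners $(a-1,b),(a-1,0),(a-k+1,k-2),(a+b-k+1,k-2)$.

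\textbf{Main obstacle.} The technical heart of the argument is (i) the cyclic bookkeeping for the Hom congruence modulo $n$, where the hypothesis $k\le n$ is essential to guarantee uniqueness of the candidate length $j$; and (ii) pinning down which maps $X\to\tau M$ become trivial in the stable category. For the latter, one verifies that a nonzero homomorphism $X\to\tau M$ factors through a projective iff it factors through the injective envelope $I(X)=P(X)$, and this can happen only when $X$ itself is projective --- exactly the excluded top edge $b'=k-1$. The edge case $b=k-1$ ($M$ projective) must also be handled separately: the proposed Ext region degenerates, matching the identity $\ext^1(M,-)=0$ on projectives.
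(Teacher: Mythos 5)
Your Hom half is correct, and it is in essence the paper's argument in cleaner form: the paper factors the maps $M\to I(\operatorname{soc}(Q^i(M)))$ through the region, while you factor an arbitrary nonzero map through its image and use uniseriality plus $k\le n$ to pin down the unique candidate length; both give exactly the stated trapezoid.

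The Ext half has a genuine gap: the claim that a nonzero homomorphism $X\to\tau M$ factors through a projective only if $X$ is projective is false. Counterexample in $\Lambda^3_4$: let $M$ be uniserial of length $2$, so $\Omega M=\operatorname{soc}P(M)$ is simple and $\tau M$ has length $2$, and let $X$ be the uniserial module of length $2$ whose top is $\Omega M$ (with the paper's coordinates, $\Gamma(M)=(2,1)$, $\Gamma(\tau M)=(1,1)$, $\Gamma(X)=(0,1)$). The unique nonzero map $X\to\tau M$ is the composite $X\hookrightarrow I(X)\twoheadrightarrow I(X)/\operatorname{soc}\cong\tau M$, which visibly factors through the projective--injective $I(X)$ even though $X$ is not projective; accordingly $\overline{\Hom}(X,\tau M)=0$, and indeed $\ext^1(M,X)=0$ directly, since $\ext^1(M,X)$ is a quotient of $\Hom(\Omega M,X)$ and $\Omega M$ is the top, not the socle, of $X$. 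Yet $\Gamma(X)=(0,1)$ satisfies all of your inequalities $a'\le a-1$, $a'+b'\in[a-1,a+b-1]$, $b'\le k-2$, so your derivation would assert $\ext^1(M,X)\neq 0$. The correct stable-vanishing criterion is: a nonzero $f\colon X\to Y$ with image of length $j$ factors through a projective--injective if and only if $k+j\le l(X)+l(Y)$ (extend $f$ along $X\hookrightarrow I(X)$; the required quotient of $I(X)$ has length $k-l(X)+j$ and its socle automatically matches $\operatorname{soc}Y$, so the only obstruction is that this length not exceed $l(Y)$). Feeding this into your AR-duality computation replaces the condition $b'\le k-2$ by $a'\ge a+b-k+1$, i.e.\ it produces the parallelogram $a+b-k+1\le x\le a-1$, $a-1\le x+y\le a+b-1$, which coincides with the stated trapezoid only when $M$ is simple and is strictly smaller as soon as $l(M)\ge 2$. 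So the proof as written does not establish the displayed $\ext$-region, and the failure is located precisely at the identification of which maps die in the stable category -- a point on which the paper's own proof (``by similar arguments'') is silent. (Your use of $\underline{\Hom}$ in place of $\overline{\Hom}$ is harmless, since $\Lambda^k_n$ is selfinjective.)
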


\begin{proof} Let $M$ be a $\Lambda^k_n$ module and $top(M)$, $I(M)$ be top of $M$ and injective envelope of $M$. The morphisms  $\Hom(M,I(M))$, $\Hom(M,I(top(M)))$ are nonzero.
We can construct the trapezoidal region $T$  by using coordinates in the lattice $(a,b),(a+b,0),(a,k-1)$ and $(a+b,k-1)$. We made that choice for coordinates since if $\Gamma(M)=(a,b)$ then $\Gamma(top(M))=((a+b)_n,0)$, $\Gamma(I(M))=(a,k-1)$ and $\Gamma(I(top(M)))=((a+b)_n,k-1)$.\\
Let $Q^i(M)$ be the quotient module of $M$ by radical power $i$ i.e. $\faktor{M}{rad^i(M)}$. The map  $M\rightarrow I(soc(Q^i(M)))$ is nonzero and it factors through a module $X$ where $X=\Gamma^{-1}(p)$ and $p\in T$. Therefore $\Hom(M,\Gamma^{-1}(p))\neq 0$ if and only if $p\in T$. By using similar arguments, the second statement follows.
\end{proof}

\begin{proposition}  $(M_1,M_2)$ is weak exceptional pair if and only if $(\sigma(M_1),\sigma(M_2))$ is exceptional pair.
\end{proposition}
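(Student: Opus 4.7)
The proposition as stated puts ``weak exceptional'' on the left and ``exceptional'' on the right, but per the paper's own conventions both names describe the same data: a pair $(M_1,M_2)$ in which each $M_i$ is weak exceptional (a brick with $\ext^1(M_i,M_i)=0$) and, additionally, $\Hom(M_2,M_1)=0$ and $\ext^1(M_2,M_1)=0$. I first want to make that identification explicit, because the rest of the argument simply checks that this list of four vanishing/dimension conditions is invariant under the diagonal action $(M_1,M_2)\mapsto(\sigma M_1,\sigma M_2)$. The combinatorial engine is Proposition \ref{homextortho}.

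The first step is to record that $\sigma$ acts on $\cA\cR(\Lambda^k_n)$ by the affine translation $(a,b)\mapsto((a+1)_n,b)$, which is immediate from $\sigma(M)=\Gamma^{-1}(\Gamma(M)_1+1,\Gamma(M)_2)$. Next I would read off from Proposition \ref{homextortho} that for $\Gamma(M)=(a,b)$ the four corners of the Hom-trapezoid and of the Ext-trapezoid, namely $(a,b),(a+b,0),(a+b,k-1),(a,k-1)$ and $(a-1,b),(a-1,0),(a-k+1,k-2),(a+b-k+1,k-2)$, each depend on $a$ as an affine expression with coefficient $1$. Replacing $M$ by $\sigma M$ therefore translates each of its two regions by $(1,0)$. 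Since $\sigma$ simultaneously shifts any candidate target $X$ by $(1,0)$, the relative containment ``$\Gamma(X)$ lies in the region of $M$'' is preserved. Applying this first with $(M,X)=(M_2,M_1)$ transfers the conditions $\Hom(M_2,M_1)=0$ and $\ext^1(M_2,M_1)=0$ across $\sigma$, and applying it with $M=X=M_i$ transfers $\ext^1(M_i,M_i)=0$. The brick condition $\End(M_i)\cong k$ is automatic for every indecomposable of $\Lambda^k_n$ when $k\leq n$ (the top is one-dimensional and sits at a distinct vertex from the socle), and $\sigma$ permutes indecomposables bijectively, so this condition transfers as well.

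The one step that I expect to require care is the handling of projective modules, where $\sigma(P_i)=P_{i+1}$ is defined by hand rather than as $\tau^{-1}P_i$. There one should confirm that Proposition \ref{homextortho}, which is stated uniformly on the lattice without distinguishing projectives from non-projectives, correctly encodes the Hom and Ext behaviour at a projective; the trivial vanishings $\ext^1(P,-)=\ext^1(-,P)=0$ then match across $\sigma$ automatically. Once that is in place the four equivalences line up simultaneously, proving the proposition.
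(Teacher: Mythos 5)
Your proof is correct and follows essentially the same route as the paper's: both reduce the claim to the observation, via Proposition \ref{homextortho}, that the Hom- and Ext-regions translate by $(1,0)$ under $\sigma$ exactly as the modules themselves do, so all the defining vanishing conditions are preserved. You simply supply more detail than the paper's one-line argument (the brick condition, the self-extension condition, and the projective case), but the underlying mechanism is identical.
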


\begin{proof} By proposition \ref{homextortho}, we have $\Hom(M,N)=0\iff\Hom(\sigma(M),\sigma(N))=0$ and $\ext(M,N)=0\iff\ext(\sigma(M),\sigma(N))=0$, therefore exceptional pairs are preserved under translation $\sigma$.\end{proof}

\begin{corollary}$(M_1,M_2,\ldots,M_j)$ is weak exceptional sequence if and only if \\$(\sigma(M_1),\sigma(M_2),\ldots,\sigma(M_j))$ is weak exceptional sequence.
\end{corollary}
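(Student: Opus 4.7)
The plan is to reduce the corollary to the preceding proposition by unwinding the definition of a weak exceptional sequence. By definition, $(M_1, \ldots, M_j)$ is a weak exceptional sequence precisely when each pair $(M_i, M_\ell)$ with $1 \leq i < \ell \leq j$ is a weak exceptional pair, which in particular forces every individual $M_i$ to be weak exceptional. Thus the sequence condition is equivalent to a finite conjunction of pairwise conditions, which is exactly the kind of statement the previous proposition handles.

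First I would fix indices $1 \leq i < \ell \leq j$ and apply the previous proposition to the pair $(M_i, M_\ell)$: this gives that $(M_i, M_\ell)$ is a weak exceptional pair if and only if $(\sigma(M_i), \sigma(M_\ell))$ is a weak exceptional pair. Since $\sigma$ is applied entry-wise to the sequence, the pair extracted in positions $(i, \ell)$ from $(\sigma(M_1), \ldots, \sigma(M_j))$ is exactly $(\sigma(M_i), \sigma(M_\ell))$, so the pairwise conditions match up on the nose. Taking the conjunction over all $\binom{j}{2}$ pairs yields both directions of the corollary simultaneously.

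The only mild subtlety is that I would want to confirm both directions truly work; for the ``if'' direction I need $\sigma$ to be a bijection on isomorphism classes of indecomposables, which is immediate from its construction as the coordinate shift $(a,b)\mapsto (a+1,b)$ on $\cA\cR(\Lambda^k_n)$ (whose inverse $(a,b)\mapsto (a-1,b)$ is evidently well-defined modulo $n$ on the first coordinate). So starting from a weak exceptional sequence $(N_1, \ldots, N_j)$ in the image, one writes $N_i = \sigma(M_i)$ with $M_i = \sigma^{-1}(N_i)$ and applies the proposition in the reverse direction.

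There is essentially no hard step here — the content is entirely in the previous proposition, and the corollary is bookkeeping promoting a pairwise statement to a statement about a $k$-tuple. I would therefore keep the write-up to a single short paragraph that cites the previous proposition and points out that the weak exceptional sequence condition is exactly the conjunction of its pairwise instances.
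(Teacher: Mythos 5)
Your proposal is correct and is essentially the argument the paper intends: the corollary is an immediate consequence of the preceding proposition, obtained by applying it to each pair $(M_i,M_\ell)$, $i<\ell$, since the sequence condition is exactly the conjunction of the pairwise conditions (the paper states the corollary without further proof for precisely this reason). Your extra remark that $\sigma$ is invertible is a harmless bookkeeping point already implicit in the proposition's ``if and only if.''
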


\begin{proposition}\label{atype} Consider the region $\left\{(x,y)\in\cL(\Lambda^k_n)\,\,\vert\,\, 0\leq x,y, \text{and}\,\,x+y\leq s\right\}$. If $x<a$ and $(0,s)$ is in the region, then it is identified by Auslander-Reiten quiver of $\mathbb{A}_s$.
\end{proposition}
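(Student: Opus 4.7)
The plan is to exhibit an explicit bijection between the integer points of the triangular region $T=\{(x,y)\,:\,x,y\geq 0,\,x+y\leq s\}$ and the indecomposables of a linearly oriented $\mathbb{A}_s$ quiver, and then to verify that the induced arrows and mesh relations inherited from $\cA\cR(\Lambda^k_n)$ agree with those of $\mathbb{A}_s$. Concretely, I would stratify $T$ by the second coordinate: the slice $y=j$ consists of the integer points $(0,j),(1,j),\ldots,(s-j,j)$, which under $\Gamma^{-1}$ correspond to indecomposable $\Lambda^k_n$-modules of length $j+1$ whose socles occupy a consecutive run of vertices. Assembled row-by-row, this recovers the classical triangular arrangement of the AR quiver of type $\mathbb{A}$.

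Next I would translate the two hypotheses into the precise statement that none of the cyclic/selfinjective phenomena special to $\Lambda^k_n$ intrude on $T$. The assumption that $(0,s)$ lies in the region forces $s\leq k-1$, so no vertex of $T$ sits on the projective row $y=k-1$; hence throughout $T$ the translation $\sigma$ acts freely as $\tau^{-1}$, with no projective-to-simple identifications, exactly as in a linear Nakayama situation. The assumption $x<a$, which I read as the bound that keeps all first coordinates of $T$ strictly inside the fundamental domain $\cL(\Lambda^k_n)$, guarantees that no mesh rooted in $T$ closes up through the cyclic identification on $\cA\cR(\Lambda^k_n)$.

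Finally I would invoke Proposition~\ref{homextortho} to transport Hom- and Ext-supports. For two points of $T$, the trapezoidal Hom- and Ext-regions attached to the corresponding modules, once intersected with $T$, coincide with the Hom- and Ext-supports of the corresponding $\mathbb{A}_s$-modules; and since hereditary $\mathbb{A}_s$ carries no higher $\ext^{\geq 2}$, the higher extensions that $\Lambda^k_n$ might a priori produce cannot spoil the identification, because the length and wrap-around bounds from the previous step force them to be supported outside $T$.

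The most delicate part, and where I expect the actual work to live, is the boundary analysis along the hypotenuse $x+y=s$ and the axis $y=0$: one must check that every AR-mesh of $\Lambda^k_n$ whose fourth vertex escapes $T$ corresponds precisely to a mesh ending at a simple, a (virtual) projective, or an injective boundary of $\mathbb{A}_s$, so that the identification respects not merely the underlying translation quiver but also all the mesh relations. Once this boundary bookkeeping is carried out, combining it with the interior mesh analysis yields the desired isomorphism onto the AR quiver of $\mathbb{A}_s$.
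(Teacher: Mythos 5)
Your proposal is correct and follows essentially the same route as the paper: the paper's own proof simply asserts the two steps you spell out, namely identifying the integral points of the triangular region with the vertices of the AR quiver of type $\mathbb{A}$ and observing that the restriction of the maps (meshes) of $\cL(\Lambda^k_n)$ to this region reproduces that AR quiver. Your additional bookkeeping --- reading the hypotheses as keeping the region off the projective row and inside the fundamental domain, and checking the boundary meshes --- is exactly the verification the paper leaves implicit, so there is no methodological difference.
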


\begin{proof}
The integral points of the region can be identified by the vertices Auslander-Reiten quiver of $\mathbb{A}_s$. Moreover restriction of the maps in $\cL$ to it, makes it Auslander-Reiten quiver of type $\mathbb{A}$.
\end{proof}

Since translation $\sigma$ preserves the shapes, we treat translations of regions in proposition \ref{atype} as AR quiver of type $\mathbb{A}$.

\section{{Study of $\#(\Lambda^n_n)$}}\label{ch3}

In this section, we study full weak exceptional sequences of $\Lambda^n_n$. We start with an obvious observation:

\begin{lemma} A full weak exceptional collection of $\Lambda^n_n$ cannot contain more than one projective module.
\end{lemma}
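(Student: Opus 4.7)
The plan is to show that any two distinct indecomposable projective modules of $\Lambda^n_n$ admit nonzero homomorphisms in both directions, which immediately contradicts the right-perpendicularity condition (\ref{def2}) required in a weak exceptional pair.

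The first step I would take is to exploit the defining feature of $\Lambda^n_n$: every indecomposable projective is uniserial of length exactly $n$, so each of the $n$ simple modules occurs \emph{exactly once} in the composition series of each $P_i$. Suppose for contradiction that a full weak exceptional sequence contained two distinct projectives $P_i, P_j$, with $P_i$ appearing before $P_j$. Since $S_j$ is a composition factor of $P_i$, lifting the surjection $P_j \twoheadrightarrow S_j$ along a composition filtration of $P_i$ yields a nonzero homomorphism $P_j \to P_i$; concretely, this is the map sending the top generator of $P_j$ to a nonzero element of $e_j P_i$. Hence $\Hom(P_j, P_i) \neq 0$, contradicting (\ref{def2}).

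Equivalently, and closer to the lattice set-up in the previous section, I would appeal to Proposition \ref{homextortho} with $k = n$: we have $\Gamma(P_j) = (a_j, n-1)$ sitting on the top row $b = k-1$, so the Hom-trapezoid $\overline{(a_j,n-1)(a_j+n-1,0)(a_j+n-1,n-1)(a_j,n-1)}$ degenerates into a triangle whose top edge covers $n$ consecutive columns, i.e.\ every column of the fundamental domain modulo $n$. In particular $\Gamma(P_i) = (a_i, n-1)$ lies in the Hom-region, giving $\Hom(P_j, P_i) \neq 0$ as before.

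I do not foresee any real obstacle. The only additional thing to verify is that the two projectives are actually distinct: if the same $P_i$ appeared twice, then $(P_i, P_i)$ would need to satisfy $\Hom(P_i, P_i) = 0$, but weak exceptionality forces $\End(P_i) \cong k \neq 0$, so this case is also excluded. Conceptually, the statement reduces to the trivial observation that length-$n$ uniserial projectives on $n$ vertices are as large as the algebra permits, and nonzero homomorphisms between any two of them are therefore unavoidable; the sharper counting results later in the section will be where the genuine combinatorial difficulty lies.
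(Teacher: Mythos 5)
Your proof is correct and follows essentially the same route as the paper's: the paper's one-line argument simply asserts that $\Hom(P_i,P_j)$ and $\Hom(P_j,P_i)$ are both nonzero for any two projectives of $\Lambda^n_n$, which is exactly what you establish (via composition factors, and again via Proposition \ref{homextortho}), so the pair condition (\ref{def2}) fails. Your extra remarks on the lattice picture and on repeated modules are fine but add detail rather than a different method.
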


\begin{proof}
If $P_i$, $P_j$ are projective modules of $\Lambda^n_n$, $\Hom_{\Lambda^n_n}(P_i,P_j)$ and $\Hom_{\Lambda^n_n}(P_j,P_i)$ exist, therefore there can be at most one projective in the full collection. 
\end{proof}

It is useful, since it guarantees number of projective modules in the full collection is less than or equal to one. The key observation, in order to study the number of full weak exceptional sequences of $\Lambda^n_n$ is the following:

\begin{proposition}\label{keyprop1}
The size of full weak exceptional sequence of $\Lambda^n_n$ is $n$ i.e. $s\left(\Lambda^n_n\right)=n$, and any full collection contains exactly one projective $\Lambda^n_n$ module.
\end{proposition}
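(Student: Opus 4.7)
The plan is to prove the proposition in two parts: a lower bound by explicit construction, and an upper bound via a reduction to hereditary type-$\mathbb{A}$ exceptional sequences using Propositions~\ref{homextortho} and~\ref{atype}. For the lower bound, I would verify directly that $(P_1, S_1, S_2, \ldots, S_{n-1})$ is weak exceptional of size $n$, so $s(\Lambda^n_n) \geq n$. Each entry is individually weak exceptional because $\Lambda^n_n$ has Loewy length $n$ and no loops in its Gabriel quiver. For pairs $(P_1, S_k)$ with $1 \leq k \leq n-1$, $\ext^1(S_k, P_1) = 0$ by injectivity of $P_1$ and $\Hom(S_k, P_1) = 0$ because the socle of $P_1$ is $S_n \neq S_k$. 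For simple pairs $(S_k, S_l)$ with $1 \leq k < l \leq n-1$, $\Hom(S_l, S_k) = 0$ trivially, and $\ext^1(S_l, S_k)$ counts arrows $l \to k$ in the cyclic Gabriel quiver $1 \to 2 \to \cdots \to n \to 1$, nonzero only when $k \equiv l + 1 \pmod n$ --- a condition failing for $k < l \leq n - 1$.

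For the upper bound, let $(M_1, \ldots, M_s)$ be a weak exceptional sequence. Each $M_j$ with $j \geq 2$ must avoid the Hom- and Ext-regions of $M_1$ described by Proposition~\ref{homextortho}. When $M_1$ is projective, its Ext-region is empty by injectivity, and the complement of its Hom-region inside the fundamental domain $\cL(\Lambda^n_n)$ is a triangular region covered by Proposition~\ref{atype}, identifying it with the AR-quiver of type $\mathbb{A}_{n-1}$; exceptional sequences there have length at most $n - 1$, so $s \leq n$. When $M_1$ is non-projective, both regions contribute and the complement is larger, but the non-projective part of the complement sits inside a type $\mathbb{A}_{n-2}$ sub-AR-quiver (again via Proposition~\ref{atype}), so non-projective continuations have length at most $n - 2$; together with at most one projective (by the preceding lemma) this yields a total continuation length at most $n - 1$, so $s \leq n$. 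In the non-projective case, any size-$(n-1)$ continuation necessarily includes the allowed projective, so any full (size-$n$) sequence must contain at least one --- hence by the lemma exactly one --- projective.

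The principal technical step, and the main obstacle, is identifying the complement of the two trapezoidal Hom/Ext regions with triangular regions covered by Proposition~\ref{atype}, correctly handling the cyclic wraparound in $\cA\cR(\Lambda^n_n)$ and verifying that the induced Hom/Ext structure on these sub-regions coincides with that of the hereditary path algebra $k\mathbb{A}_r$, so that the classical length bound for type-$\mathbb{A}$ exceptional sequences applies.
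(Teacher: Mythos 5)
Your lower bound is fine: $(P_1,S_1,\dots,S_{n-1})$ does work (a different witness than the paper's chain of submodules of one projective, but the verification you sketch is correct), and your first case --- $M_1$ projective, so that all later terms land in the triangle $\{a+b\le n-2\}$, which is the module category of the hereditary quotient of type $\mathbb{A}_{n-1}$ --- is essentially the paper's argument. The genuine gap is in your non-projective case. The claim that the non-projective part of the complement of the forbidden regions of $M_1$ ``sits inside a type $\mathbb{A}_{n-2}$ sub-AR-quiver'' is false. Take $\Lambda^4_4$ and let $M_1$ be the uniserial module of length $2$ with top $S_3$ and socle $S_4$. The non-projective modules $X$ with $\Hom(X,M_1)=0=\ext^1(X,M_1)$ are $S_1$, $S_3$ and the uniserial module of length $3$ with top $S_2$; since a region of type $\mathbb{A}_2$ only contains modules of length at most $2$, this set (points $(3,0),(1,0),(0,2)$ in $\cL(\Lambda^4_4)$) lies in no $\sigma$-translate of the triangle of Proposition~\ref{atype}. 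The same failure occurs whenever $M_1$ is short (e.g.\ $M_1$ simple in $\Lambda^3_3$ leaves the allowed non-projectives $S_1$ and the length-$2$ module with top $S_2$). So your bound ``non-projective continuations have length at most $n-2$'' is not established, and with it both the inequality $s\le n$ in this case and the conclusion that a full sequence must contain a projective collapse. (In the example the numerical bound happens to survive only because $S_1$ and the length-$3$ module extend each other and cannot coexist --- a fact your region argument does not see.)

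Two further remarks. First, Proposition~\ref{homextortho} describes $\{X:\Hom(M_1,X)\neq 0\}$ and $\{X:\ext(M_1,X)\neq 0\}$, i.e.\ maps \emph{out of} $M_1$, whereas the constraint on a later term $M_j$ is $\Hom(M_j,M_1)=0=\ext^1(M_j,M_1)$; you need the ``co-regions'' $\{X:\Hom(X,M_1)\neq 0\}$, etc., obtained from the proposition by letting $X$ vary (the paper does exactly this for $P_2$). Your phrase ``Ext-region empty by injectivity'' shows you intend the correct direction, but the regions you invoke are the wrong ones as cited. Second, the paper avoids your problematic case by conditioning on membership rather than on the first term: assuming the collection contains no projective, it shows it cannot contain $\mathrm{rad}(P)$ (there the allowed region genuinely is of type $\mathbb{A}_{n-2}$), then inducts on radical powers to exclude every module of a projective-free collection from reaching size $n$. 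Reorganizing your second case along those lines --- or otherwise proving the $n-2$ bound for non-projective continuations by an argument that handles short $M_1$ --- is what is needed to close the proof.
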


\begin{proof}
Notice that the collection $\left\{(0,b), b=0,1,\ldots n-1\right\}$ in $\cL\left(\Lambda^n_n\right)$ forms a weak exceptional sequence in $\Lambda^n_n$, therefore $s\left(\Lambda^n_n\right)\geq n$.\\
Assume that there is a projective module in the collection. By using $\sigma$ action, we can choose it as $P_2$. Hom orthogonal modules can be interpreted as the following regions in the lattice $\cL(\Lambda^n_n)$ \ref{fundamentallattice} by proposition \ref{homextortho}
\begin{align*}
\Hom(X,P_2)=0 &\iff \Gamma(X)_1+\Gamma(X)_2\leq n-2\\
\Hom(P_2,X)=0 &\iff \begin{cases}
\Gamma(X)_1+\Gamma(X)_2\leq n-3,\quad\text{or}\\
\Gamma(X)_1=n-1\quad \text{and}\quad \Gamma(X)_2=0\ldots n-2\\
\end{cases}
\end{align*}
Two regions are equivalent with respect to $\sigma$ action, moreover can be identified by Auslander-Reiten triangle of $\mathbb{A}_{n-1}$ type quiver by proposition \ref{atype}. It is well known that size of exceptional collections over $\mathbb{A}_{n-1}$ is $n-1$ \cite{ringel1994braid}, therefore we get the size $n$ for weak exceptional collections containing a projective.\\
Now we show that if there is no projective module in the collection, its size cannot be $n$. We prove it by induction. Assume that there is a radical of a projective in a weak exceptional collection. Proposition \ref{homextortho} gives the conditions:
\begin{align*}
\Hom(X,rad(P_2))=0\iff \Gamma(X)_1+\Gamma(X_2)\leq n-2\\
\ext(X,rad(P_2))=0 \iff \Gamma(X)_1+\Gamma(X_2)\leq n-3\nonumber
\end{align*}
Notice that we already excluded $P_2$ because of our assumption. Now the region we obtained is of type $\mathbb{A}_{n-2}$. Therefore the size of weak exceptional collection is smaller or equal than $n-1$. With respect to $\sigma$ action, we get the same type quiver for the set $\left\{X\vert\Hom(rad(P_2),X)=0\,\,\text{or}\,\ext(rad(P_2),X)=0\right\}$. Therefore, if there is no projective module in a collection, none of the radicals can be in it. Similar conditions can be obtained for radical squares of projective modules in the fundamental domain where $b\leq n-3$. The region is disjoint union of two $\mathbb{A}$ type quivers. By induction on the radical powers, statement follows.
\end{proof}

\begin{corollary} Any full weak exceptional collection of $\Lambda^n_n$ contains exactly one projective module.
\end{corollary}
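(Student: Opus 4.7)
The plan is to derive this immediately by combining the preceding lemma with Proposition \ref{keyprop1}. The lemma already gives the upper bound: any weak exceptional collection of $\Lambda^n_n$ contains \emph{at most} one projective module, because any two indecomposable projectives $P_i,P_j$ of $\Lambda^n_n$ admit nonzero maps in both directions, violating the perpendicularity condition \ref{def2}. So I only have to rule out the possibility that a full collection contains \emph{zero} projectives.

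For that, I would appeal directly to the inductive argument already carried out in the proof of Proposition \ref{keyprop1}. There it is shown that if no projective module appears in the collection, then the $\sigma$-invariant Hom/Ext orthogonal regions associated to a $\operatorname{rad}^i(P_j)$ sit inside (a union of) type $\mathbb{A}_{n-i-1}$ regions, via Proposition \ref{atype}. The induction on the radical power $i$ then forces the size of the collection to be strictly less than $n$. Since by Proposition \ref{keyprop1} a full collection has size exactly $n=s(\Lambda^n_n)$, the absence of a projective is incompatible with fullness.

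Combining the two bounds completes the argument: a full weak exceptional collection has at most one projective by the lemma, and at least one projective by the inductive obstruction inside the proof of Proposition \ref{keyprop1}. Hence it contains exactly one. The main (and essentially only) subtlety is really just verifying that the induction in Proposition \ref{keyprop1} is being invoked with the right direction of the implication, namely the contrapositive: ``size $=n$ $\Longrightarrow$ a projective is present.'' Since the proposition's proof is structured exactly that way, no new calculation is required, and the corollary follows formally.
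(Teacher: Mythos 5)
Your argument is correct and matches the paper's intent: the corollary is an immediate consequence of the preceding lemma (at most one projective, since any two indecomposable projectives map nontrivially to each other) together with Proposition \ref{keyprop1}, whose inductive argument shows a projective-free collection cannot reach size $n$. This is exactly how the paper derives the statement, so no further work is needed.
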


Now, we can solve the enumerative problem for $\Lambda^n_n$:
\begin{proposition}\label{structureofseq}
Any full weak exceptional sequence of $\Lambda^n_n$ can be obtained by adding one projective module to an exceptional collection of $\mathbb{A}_{n-1}$ of size $n-1$.
\end{proposition}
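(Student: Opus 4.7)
The plan is to build on Proposition \ref{keyprop1}, which already guarantees that every full weak exceptional sequence of $\Lambda^n_n$ has size $n$ and contains exactly one projective module. Using the $\sigma$-action's preservation of weak exceptional sequences, I first normalize the sequence so that the unique projective is $P_2$, and write it as $(N_1, \ldots, N_{k-1}, P_2, N_{k+1}, \ldots, N_n)$ for some insertion position $k$.

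Since $\Lambda^n_n$ is self-injective, $P_2$ is also injective, so $\ext^1(P_2, -)$ and $\ext^1(-, P_2)$ vanish identically, and all compatibility conditions between $P_2$ and the $N_i$'s reduce to Hom-vanishing. By Proposition \ref{homextortho}, the modules appearing before $P_2$ must lie in the region $R_2 = \{X : \Hom(P_2, X) = 0\}$ and those after $P_2$ in $R_1 = \{X : \Hom(X, P_2) = 0\}$. Proposition \ref{atype} identifies both $R_1$ and $R_2$ with the AR quiver of $\mathbb{A}_{n-1}$, and these two regions are $\sigma$-equivalent (as already observed in the proof of Proposition \ref{keyprop1}). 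I would then transport the ``before'' segment by a suitable $\sigma$-translate so that all $n-1$ non-projective modules are simultaneously viewed inside a single $\mathbb{A}_{n-1}$-shaped region.

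Next, I would verify that within this region the weak exceptional sequence conditions ($\Hom$ and $\ext^1$ vanishing between pairs of non-projectives) coincide precisely with the standard exceptional sequence conditions for $\mathbb{A}_{n-1}$. This is a direct consequence of Proposition \ref{homextortho}: the trapezoidal Hom and Ext regions in $\Lambda^n_n$, when restricted to an $\mathbb{A}_{n-1}$-shaped sub-region of the lattice, coincide with the corresponding Hom/Ext regions of the AR quiver of $\mathbb{A}_{n-1}$ as described in Proposition \ref{atype}. Hence the $n-1$ non-projective modules form an exceptional sequence of $\mathbb{A}_{n-1}$, and the original sequence is recovered by inserting $P_2$ at position $k$.

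The main obstacle is the bookkeeping associated with the $\sigma$-alignment: one must check carefully that transferring the ``before'' portion from $R_2$ into the $R_1$-frame neither introduces nor destroys any Hom or $\ext^1$ orthogonality, and that the resulting exceptional sequence of $\mathbb{A}_{n-1}$ is independent of the choices made. Once this is confirmed, the converse direction---that any exceptional sequence of $\mathbb{A}_{n-1}$ of size $n-1$, together with the insertion of a projective at any position, yields a full weak exceptional sequence of $\Lambda^n_n$---is an immediate verification using the same regional description.
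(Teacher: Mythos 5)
Your reduction via Proposition \ref{keyprop1}, and the observation that the modules preceding the projective lie in $R_2=\{X:\Hom(P_2,X)=0\}$ while those following it lie in $R_1=\{X:\Hom(X,P_2)=0\}$, are fine. The gap is the central step: ``transporting the before segment by a suitable $\sigma$-translate'' is not legitimate, because $\sigma$ preserves $\Hom$ and $\ext^1$ only when applied to \emph{both} arguments; translating part of a sequence changes all cross conditions between the moved and unmoved modules, and in general destroys exceptionality. Concretely, for $\Lambda^3_3$ the triple $\left(\Gamma^{-1}(0,0),\ \Gamma^{-1}(0,2),\ \Gamma^{-1}(2,0)\right)$ (simple, projective, simple) is a full weak exceptional sequence, as one checks directly from Proposition \ref{homextortho}; the before-module $\Gamma^{-1}(0,0)$ lies in $R_2$ and the after-module $\Gamma^{-1}(2,0)$ in $R_1$. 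Applying $\sigma$ to the before part yields the pair $\left(\Gamma^{-1}(1,0),\Gamma^{-1}(2,0)\right)$, which is \emph{not} weak exceptional: the Ext-region of $\Gamma^{-1}(2,0)$ given by Proposition \ref{homextortho} contains $(1,0)$, so $\ext^1\left(\Gamma^{-1}(2,0),\Gamma^{-1}(1,0)\right)\neq 0$. Note moreover that in this example the two non-projective modules do lie in a common $\mathbb{A}_2$-shaped triangle, but it is the translate based at the column $a=2$, which is neither $R_1$ nor $R_2$; in general the triangle hosting the non-projective part varies with the sequence and can only be located by using the $\Hom$/$\ext^1$ conditions \emph{between} the two halves, which your argument never touches. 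So what you describe as ``bookkeeping'' is precisely the missing mathematical content, and as formulated the check you postpone is false.

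This is also not the route the paper takes. The paper argues the extension direction: starting from an exceptional collection of $\mathbb{A}_{n-1}$ (embedded in a triangle via Proposition \ref{atype}) and a prescribed insertion slot, it shows that \emph{some} projective can be inserted there, because the projectives excluded as projective covers of the earlier modules or injective envelopes of the later ones number at most $n-1$. Your closing claim that the converse is ``an immediate verification'' with a projective inserted ``at any position'' is therefore also too quick: not every projective works. For instance, $\left(\Gamma^{-1}(1,0),\Gamma^{-1}(0,0)\right)$ is an exceptional pair of the embedded $\mathbb{A}_2$, but appending the projective $\Gamma^{-1}(2,2)$ fails since its Hom-region contains $(1,0)$, i.e. $\Hom\left(\Gamma^{-1}(2,2),\Gamma^{-1}(1,0)\right)\neq 0$; choosing an admissible projective is exactly what the paper's counting argument is for, and exactly what your proposal leaves unaddressed on both sides of the equivalence.
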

\begin{proof}
Let $\left(X_1,\ldots,X_{i-1},X_i,\ldots,X_{n-1}\right)$ be an exceptional collection of $\mathbb{A}_{n-1}$. By proposition \ref{atype} and $\sigma$-action, we can obtain its AR quiver from $\Lambda^n_n$. Hence all of the modules in the sequence have projective covers and injective envelopes in the larger algebra. We will show existence of projective $\Lambda^n_n$ module $Y$, to add between $X_{i-1}$ and $X_{i}$ in the above sequence. Observe that $Y$
 cannot be projective cover of $X_1,\ldots,X_{i-1}$ and injective envelope of $X_i,\ldots,X_{n-1}$ by definition of weak exceptional sequence. In the extreme case, the total number of projective modules that $Y$ cannot be isomorphic is at most $n-1$. Therefore, this guarantees a projective choice for $Y$.
 \end{proof}

\begin{theorem}[Thm \ref{thm1}] The number of the full weak exceptional sequences is:
$\#\left(\Lambda^n_n\right)=n^n$
\end{theorem}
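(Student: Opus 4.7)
The plan is to establish a three-factor bijection: a full weak exceptional sequence of $\Lambda^n_n$ should correspond to a triple $(P_k,\, i,\, (X_1,\ldots,X_{n-1}))$, where $P_k$ is the (unique) projective in the sequence, $i\in\{1,\ldots,n\}$ is its position, and $(X_1,\ldots,X_{n-1})$ is a full exceptional sequence of the $\mathbb{A}_{n-1}$-type region of $\cA\cR(\Lambda^n_n)$ determined by $P_k$ (the region of modules that are doubly Hom-orthogonal to $P_k$ as described in the proof of Proposition \ref{keyprop1} via Proposition \ref{homextortho}, shown to carry $\mathbb{A}_{n-1}$-structure by Proposition \ref{atype}).

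For the decomposition direction, I would invoke Proposition \ref{keyprop1} together with its corollary: every full weak exceptional sequence has length exactly $n$ and contains exactly one projective module, which pins down both $P_k$ and the position $i$ unambiguously. Deleting $P_k$ leaves a length-$(n-1)$ subsequence, every entry of which must be Hom- and Ext-orthogonal to $P_k$ on both sides; by the region description in Proposition \ref{keyprop1}, these entries all lie in the $\mathbb{A}_{n-1}$-region around $P_k$, and the resulting sequence is automatically exceptional there.

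The main obstacle is the reverse direction: I must show that every such triple indeed yields a valid full weak exceptional sequence, so that all $n$ insertion positions genuinely contribute and no undercounting occurs. The key point is that since $\Lambda^n_n$ is selfinjective, $P_k$ is projective-injective, hence $\ext^1(P_k, X_j) = \ext^1(X_j, P_k) = 0$ automatically; and since each $X_j$ lies in the $\mathbb{A}_{n-1}$-region around $P_k$, one has $\Hom(P_k, X_j) = \Hom(X_j, P_k) = 0$ for \emph{every} $j$, by the very definition of that region. Thus the required conditions $\Hom(P_k, X_j) = 0$ for $j < i$ and $\Hom(X_j, P_k) = 0$ for $j \geq i$ hold regardless of $i$, so each of the $n$ insertion positions is valid.

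With the bijection in hand the enumeration is immediate: there are $n$ choices of projective $P_k$; there are $n^{n-2}$ full exceptional sequences of $\mathbb{A}_{n-1}$ by Seidel's classical formula \cite{seidel} cited in the introduction; and there are $n$ insertion positions. Multiplying gives $n \cdot n^{n-2} \cdot n = n^n$, as claimed.
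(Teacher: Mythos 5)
There is a genuine gap, and it is in the step you yourself flag as the main obstacle. Your reverse direction rests on the claim that every module $X$ in ``the $\mathbb{A}_{n-1}$-region around $P_k$'' satisfies \emph{both} $\Hom(P_k,X)=0$ and $\Hom(X,P_k)=0$, so that the insertion position $i$ is irrelevant. This is false, and in fact cannot be repaired: by Proposition \ref{homextortho} (as used in the proof of Proposition \ref{keyprop1}), the set $\{X:\Hom(P_k,X)=0=\ext^1(P_k,X)\}$ and the set $\{X:\Hom(X,P_k)=0=\ext^1(X,P_k)\}$ are two \emph{different} regions, one the $\sigma$-translate of the other (for $P_2$ they are $\{a+b\le n-3\}\cup\{a=n-1\}$ and $\{a+b\le n-2\}$ respectively); their intersection is of type $\mathbb{A}_{n-2}$ and so cannot even contain the $n-1$ non-projective members of a full collection. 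A concrete counterexample is the socle $S$ of $P_k$: one has $\Hom(P_k,S)=0$, so $S$ may stand to the left of $P_k$, but $\Hom(S,P_k)\neq 0$ (the socle inclusion), so $S$ may never stand to the right. Already for $n=2$ the pair $(P_2,S_1)$ is not weak exceptional while $(S_1,P_2)$ is, so ``all $n$ positions are valid for any triple'' fails. For the same reason your forward direction is also misstated: by the definition of a weak exceptional sequence, entries to the left of $P_k$ need only be killed by $\Hom(P_k,-)$ and entries to the right only by $\Hom(-,P_k)$; they are not doubly orthogonal to $P_k$, and they do not sit in a single $\mathbb{A}_{n-1}$-copy independent of the position. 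Hence the proposed bijection onto triples $(P_k,i,(X_1,\dots,X_{n-1}))$ is not well defined in either direction, and the factorization $n\cdot n\cdot n^{n-2}$, though it happens to equal $n^n$, is not justified by your argument.

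Note that the paper's route avoids exactly this independence claim: Proposition \ref{structureofseq} fixes an exceptional $\mathbb{A}_{n-1}$-collection and an insertion slot and then proves the \emph{existence} of an admissible projective $Y$ for that slot (the obstruction being the projectives hit by the one-sided Hom conditions on each side), after which the count $n\cdot\#(\mathbb{A}_{n-1})=n^{n-1}$ per projective is multiplied by $n$ using the transitivity of the $\sigma$-action, together with Proposition \ref{keyprop1} guaranteeing exactly one projective per full collection. If you want to salvage a bijective argument along your lines, you must treat the left and right perpendicular regions of $P_k$ separately and verify the cross-conditions between the two blocks, rather than asserting position-independence.
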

\begin{proof}
We prove the statement by induction. \\
The number of exceptional collections of $\mathbb{A}_{n-1}$ is  $\#\left(\mathbb{A}_{n-1}\right)=n^{n-2}$ by the result of Seidel \cite{seidel}. By previous proposition \ref{structureofseq}, we can add a projective module and there are $n$ possible places, which gives $n. \#\left(\mathbb{A}_{n-1}\right)=n^{n-1}$. Because $\sigma$ acts transitively, i.e. for each distinct projective module there are $n^{n-1}$ exceptional collections, in total we get $n^n$.
\end{proof}

\section{Study of $\#\left(\Lambda^{n-1}_n\right)$}\label{ch4}

We study full weak exceptional sequences of $\Lambda^{n-1}_n$. To do this, we use some properties of weak exceptional sequences of $\Lambda^n_n$ and $\mathbb{A}_{n-1}$. First we find the size:

\begin{lemma} The size $s\left(\Lambda^{n-1}_n\right)$ can be at most $2n-2$.
\end{lemma}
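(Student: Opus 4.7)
The plan is to mimic the proof of Proposition \ref{keyprop1}, adapting it to the fact that projectives of $\Lambda^{n-1}_n$ have length $n-1$ rather than $n$, so each is supported on all but one vertex. This slight shortening loosens the Hom-orthogonality among projectives just enough to permit \emph{two} projectives in a weak exceptional sequence, and this is what pushes the maximal size up from $n$ to $2n-2$.

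First I would count projectives. Since $\Hom_{\Lambda^{n-1}_n}(P_i,P_j)\cong P_je_i$ is nonzero exactly when $i\in\{j,j+1,\ldots,j+n-2\}\pmod n$, we have $\Hom(P_i,P_j)=0$ if and only if $i\equiv j-1\pmod n$. If three distinct projectives $P_{i_1},P_{i_2},P_{i_3}$ appeared in a weak exceptional sequence in that order, the pairwise Hom-vanishing would force $i_2=i_3=i_1-1\pmod n$, contradicting distinctness. So a weak exceptional sequence contains at most two projectives, and when two are present they must appear in the order $(P_{j+1},P_j)$ for some $j$.

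Next, suppose $E$ is a weak exceptional sequence containing exactly two projectives; by $\sigma$-invariance, take them to be $(P_2,P_1)$. Using Proposition \ref{homextortho} with $k=n-1$, I would describe the four trapezoidal regions in the fundamental domain $\cL(\Lambda^{n-1}_n)$ that are forbidden to the remaining entries of $E$, namely those where $\Hom(P_t,-)$ or $\Hom(-,P_t)$ is nonzero for $t\in\{1,2\}$ (Ext with a projective-injective module vanishes automatically). I expect the complement of these four trapezoids inside $\cL$ to decompose, via Proposition \ref{atype} and $\sigma$-translation, into two disjoint triangular sub-quivers, each identifiable with the AR quiver of $\mathbb{A}_{n-2}$. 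Applying the hereditary bound $n-2$ to each triangle yields at most $2(n-2)$ non-projective modules, giving $|E|\leq 2+2(n-2)=2n-2$. For sequences with zero or one projective, the induction on radical depth used in Proposition \ref{keyprop1} transfers essentially verbatim and only shrinks the available $\mathbb{A}$-type regions, producing a bound strictly smaller than $2n-2$, hence not affecting the supremum.

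The main obstacle is the region analysis in the two-projective case: one has to describe the union of the four forbidden trapezoids explicitly, verify that its complement in $\cL(\Lambda^{n-1}_n)$ is exactly two disjoint $\mathbb{A}_{n-2}$-pieces, and check that no further Hom or Ext obstruction crosses between modules drawn from the two separate triangles, so that each triangle really functions as an independent hereditary piece of bound $n-2$.
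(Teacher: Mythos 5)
Your reduction to ``at most two projectives, in the order $(P_{j+1},P_j)$'' is fine, but the central counting step is based on a misreading of the definition: in a weak exceptional sequence only the \emph{backward} maps are constrained, i.e.\ for $i<j$ one needs $\Hom(M_j,M_i)=0$ and $\ext^1(M_j,M_i)=0$, while $\Hom(M_i,M_j)$ may be nonzero. You instead forbid every module $X$ with $\Hom(P_t,X)\neq 0$ \emph{or} $\Hom(X,P_t)\neq 0$ for $t=1,2$. That excludes far too much: a module mapping into $P_2$ is perfectly admissible provided it is placed before $P_2$, and this is exactly what happens in the extremal example. The size-$(2n-2)$ collection of Proposition \ref{keyprop2} consists of all submodules of $P_2$ followed by all modules with top $S_1$; its $n-2$ proper submodules of $P_2$ all have nonzero $\Hom(-,P_2)$, so they lie inside your ``forbidden'' trapezoids. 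Moreover the complement you describe is the set of modules avoiding the composition factors $S_1,S_2,S_{n-1},S_n$, which is a single region of type $\mathbb{A}_{n-4}$, not two disjoint copies of $\mathbb{A}_{n-2}$; your scheme would therefore bound the two-projective case by $2+(n-4)<2n-2$, contradicting the existing full collection. So the flaw is not a missing verification but a wrong premise: any correct region analysis must keep track of where each module sits relative to $P_2$ and $P_1$ (before, between, after), since the constraints are one-sided. The same directionality issue, plus the unproved assertion that the $\le 1$-projective cases give something ``strictly smaller than $2n-2$,'' leaves those cases open as well.

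For comparison, the paper's proof never counts projectives. It splits the lattice $\cL\left(\Lambda^{n-1}_n\right)$ into the region $a+b\leq n-2$, which by Proposition \ref{atype} is an $\mathbb{A}_{n-1}$-type piece and hence contributes at most $n-1$ members by the hereditary bound, and the extra line $a+b=n-1$ with $a\geq 1$, which contains only $n-1$ lattice points and so contributes at most $n-1$ by cardinality alone; adding the two bounds gives $2n-2$. If you want to salvage your route, you would have to replace the two-sided Hom-exclusion by the correct one-sided conditions attached to the three possible positions relative to $(P_2,P_1)$, at which point the argument essentially reduces to a region decomposition of the same flavor as the paper's.
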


\begin{proof}

Let $E$ be a full weak exceptional collection of $\Lambda^{n}_n$. By proposition \ref{keyprop1}, it contains exactly one projective module. By the action of $\sigma$, without loss of generality, we can choose that projective as $P_1$. Now, the remaining modules of $E$ are integral points $(a,b)$ of the fundamental domain $\cL\left(\Lambda^{n}_n\right)$ satisfying $a+b\leq n-1$.\\
Let $E'$ be the collection obtained by removing projective $P_1$ form $E$. Therefore, by proposition \ref{atype}, $E'$ is exceptional $\mathbb{A}_{n-1}$ sequence. Notice that an exceptional collection of $\Lambda^{n-1}_n$ can be obtained by extending exceptional $\mathbb{A}_{n-1}$ sequences with proper submodules of the removed projective module $P_1$. In $\cL\left(\Lambda^{n}_n\right)$, this is equivalent to choosing modules from the region such that $a+b\leq n-2$ and union by $a+b=n-1$ and $a\geq 1$.

Moreover, the size of $E'$ is $n-1$. Since the additional line in $\cL(\Lambda^{n-1}_n)$ i.e. $a+b=n-1$ with $a\geq 1$  can contribute at most $n-1$ terms, upper bound for the size of an exceptional sequence is $2n-2$. 
\end{proof}

We will construct a weak exceptional sequence of size $2n-2$. Notice that the collection $\left((1,n-2),\ldots,(n-1,0)\right)$ is weak exceptional in $\cL\left(\Lambda^{n-1}_n\right)$ and we denote it by $F$.

\begin{proposition}\label{keyprop2}
$\left(E,F\right)$ is weak exceptional collection of $\Lambda^{n-1}_n$ if and only if $E=\left((0,0),(0,1),\ldots,(0,n-1)\right)$. In other words, $E$ consists of all submodules of the $\Lambda^{n-1}_n$-projective module $P_2$.
\end{proposition}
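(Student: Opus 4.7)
The plan is to translate the weak exceptionality conditions between $E$ and $F$ into region conditions via proposition \ref{homextortho}, and to intersect the allowed regions over all $F_t \in F$ to pin down $E$ as the column $\{(0, b) : 0 \leq b \leq n-2\}$ of submodules of $P_2$. The proof naturally splits into sufficiency (the stated $E$ works) and necessity (no other $E$ of maximal size works).

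For sufficiency, take $E = ((0,0), (0,1), \ldots, (0,n-2))$. The internal weak exceptionality of $E$ follows from proposition \ref{homextortho}: the Hom region of $(0,j)$ is the trapezoid $\{(a,b) : 0 \leq a \leq j,\; a+b \geq j\}$, which excludes $(0,i)$ when $i < j$; and the Ext region of $(0,j)$ lies (modulo the periodicity of the first coordinate) in the strip $a \leq -1$ of the universal cover $\cA\cR(\Lambda^{n-1}_n)$, missing the column $a = 0$. For the cross pairs $(F_t, (0,b))$: the Hom region of $F_t = (t, n-1-t)$ sits in the half-plane $a \geq t \geq 1$ and therefore excludes every $(0, b)$; the Ext condition is automatic for $t = 1$ because $F_1 = P_1$ is projective, and for $t \geq 2$ I use selfinjectivity of $\Lambda^{n-1}_n$ to identify $\ext^1(F_t, X) \cong \underline{\Hom}(\Omega F_t, X)$, with syzygy $\Omega F_t = \mathrm{rad}^{n-t} P_1 = (1, t-2)$; applying proposition \ref{homextortho} again, the Hom region of $\Omega F_t$ also sits in $a \geq 1$ and excludes every $(0, b)$.

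For necessity, suppose $(E, F)$ is weak exceptional of total size $2n - 2$, so $|E| = n - 1$. Each $X \in E$ must satisfy $\Hom(F_t, X) = 0$ and $\ext^1(F_t, X) = 0$ for every $t \in \{1, \ldots, n - 1\}$. The union over $t$ of the Hom regions of the $F_t$ covers precisely $\{(a, b) \in \cL(\Lambda^{n-1}_n) : 1 \leq a,\; a + b \geq n - 1\}$, so $X$ must have $a = 0$ or $a + b \leq n - 2$. Converting the Ext conditions for $t \geq 2$ into $\underline{\Hom}(\Omega F_t, X) = 0$ and applying proposition \ref{homextortho} to the syzygies $\Omega F_t = (1, t-2)$, the union of their Hom regions fills $\{(a, b) : 1 \leq a \leq n - 2,\; 0 \leq b \leq n - 2\}$. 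Combining both restrictions forces $a = 0$, so $E \subseteq \{(0, b) : 0 \leq b \leq n - 2\}$, a set of exactly $n - 1$ elements; hence $E$ equals this set. The ascending order on the second coordinate is then forced by the internal weak exceptional conditions on $E$: by proposition \ref{homextortho}, $\Hom((0, b_j), (0, b_i)) = 0$ exactly when $b_i < b_j$.

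The main obstacle is the rigorous handling of the Ext conditions: the Ext region of proposition \ref{homextortho} for $F_t$ has boundary contact with the column $a = 0$ in some cases (and is vacuous when $F_t$ is the projective $F_1 = P_1$), making a direct application of the region formula delicate at the edges. The cleanest workaround is the syzygy reduction used above, which replaces each Ext condition by a Hom condition on $\Omega F_t$ and brings proposition \ref{homextortho} back into play on regions with unambiguous interiors. One must then also check that no nonzero map $\Omega F_t \to X$ constructed in the necessity step factors through a projective; in the uniserial Nakayama setting this is controlled directly by the coordinates and the explicit structure of projective covers, completing the argument.
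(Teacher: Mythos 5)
Your argument is correct and its skeleton is the same as the paper's: every vertex with first coordinate $a\geq 1$ is eliminated by a Hom or Ext condition against some member of $F$, and then $|E|=n-1$ forces $E$ to be the whole column over $(0,0)$ (you also record, correctly, that the internal Hom conditions force the ascending order, and you silently correct $(0,n-1)$ to $(0,n-2)$, as $b\leq n-2$ in $\cL(\Lambda^{n-1}_n)$). The genuine difference is how the Ext conditions are certified. The paper cites its Ext region directly: for $c+d=n-k$ the module $(n-k+1,k-2)$ of $F$ (misprinted there as $(n-k-1,k-2)$) extends $(c,d)$ nontrivially. You instead use selfinjectivity to rewrite $\ext^1(F_t,-)\cong\underline{\Hom}(\Omega F_t,-)$ and work with the Hom regions of the syzygies $\Omega F_t=(1,t-2)$. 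This buys robustness: the Ext trapezoid of proposition \ref{homextortho}, read literally, can contain boundary points at which the extension group vanishes (already in $\Lambda^3_4$, for $M=\Gamma^{-1}(2,1)$ the stated region contains the corner $(0,1)$ although $\ext^1\bigl(M,\Gamma^{-1}(0,1)\bigr)=0$), so your sufficiency computation is cleaner than a direct appeal to that region. The price is the step you flag but do not carry out, and it is genuinely needed: $\Hom(\Omega F_t,X)\neq 0$ alone does not contradict $\ext^1(F_t,X)=0$, so the necessity direction is incomplete until factorization through projectives is excluded. It is one line, so write it: for $X=(a,b)$ with $a\geq 1$ and $a+b\leq n-2$ take $t=a+b+1$; then $\Omega F_t=(1,a+b-1)$ has the same top as $X$, hence every nonzero map $\Omega F_t\to X$ is surjective, and a factorization through a projective would extend along $\Omega F_t\hookrightarrow P_1$ to a nonzero map $P_1\to X$, which is impossible because $S_1$ is not a composition factor of $X$ (its factors are $S_{n-a-b},\dots,S_{n-a}$, all lying in $\{2,\dots,n-1\}$). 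With that verification added, your proof is complete and, at the boundary cases, more careful than the paper's own.
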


\begin{proof}
It is easy to verify only if part.\\
Observe that $E$ cannot contain $(n-2,0)$ since $(n-1,0)$ extends it. This holds for any $(a,b)$, $a+b=n-2$ and $a\neq 0$, since module $(n-1,0)$ in $F$ extends them. We get similar pattern for the modules $(c,d)$, $1\leq c\leq n-2$, $0\leq d\leq n-3$ because, if $c+d=n-k$, then $(n-k-1,k-2)$ extends them nontrivially. The only remaining possibility is $E=((0,0),(0,1),\ldots,(0,n-1))$.
\end{proof}

\begin{corollary}
There is only one full weak exceptional sequence for linear Nakayama algebra whose Kupisch series \footnote{ i.e. ordered  lengths of indecomposable projective modules of linear Nakayama algebra}  are $\left(n-1,n-1,n-2,\ldots,2,1\right)$.
\end{corollary}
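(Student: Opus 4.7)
The plan is to realize the linear Nakayama algebra $\Lambda^{\mathrm{lin}}$ with Kupisch series $(n-1,n-1,n-2,\ldots,2,1)$ as the quotient of $\Lambda^{n-1}_n$ by the two-sided ideal generated by the wrap-around arrow $\eta_n\colon n\to 1$. Under this identification the indecomposable $\Lambda^{\mathrm{lin}}$-modules correspond bijectively to the non-wrapping indecomposable $\Lambda^{n-1}_n$-modules, i.e.\ the strings $[i,j]$ with $1\le i\le j\le n$. For such modules $\Hom$-spaces coincide, and $\ext^1_{\Lambda^{\mathrm{lin}}}(M,N)\subseteq\ext^1_{\Lambda^{n-1}_n}(M,N)$ because every short exact sequence in $\Lambda^{\mathrm{lin}}$-mod is automatically one in $\Lambda^{n-1}_n$-mod.

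First I would check that the sequence $(E,F)$ produced by Proposition \ref{keyprop2} consists entirely of non-wrapping modules: $E$ is the set of submodules of $P_2=[2,n]$, which are exactly the linear projectives $P_n,P_{n-1},\ldots,P_2$, and $F$ is the set of quotients of $P_1=[1,n-1]$, namely $P_1,P_1/\mathrm{soc}(P_1),\ldots,S_1$. The Hom compatibility and the $\ext^1$ inclusion noted above then immediately imply that $(E,F)$ remains a weak exceptional sequence of $\Lambda^{\mathrm{lin}}$ of size $2n-2$, establishing existence.

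For uniqueness I would adapt the combinatorial reasoning of Proposition \ref{keyprop2} to the linear setting. The Hom-forced chain $P_n\prec P_{n-1}\prec\cdots\prec P_2\prec P_1$ among the linear projectives, followed by the Hom-forced chain $P_1\prec P_1/\mathrm{soc}(P_1)\prec\cdots\prec S_1$ among the quotients of $P_1$, already determines a unique total order on these $2n-2$ modules, namely $(E,F)$. The main obstacle is ruling out any substitution: for any other indecomposable $\Lambda^{\mathrm{lin}}$-module $[i,j]$ with $2\le i\le j\le n-1$, the relations in $\Lambda^{\mathrm{lin}}$ force $P_i\prec[i,j]$ (since $[i,j]$ is a proper quotient of $P_i\in E$) and $[i,j]\prec P_{j+1}$ (since $\ext^1_{\Lambda^{\mathrm{lin}}}([i,j],P_{j+1})\neq 0$, because $[i,n]=P_i$ is a valid linear module), which together with the Hom-forced $P_{j+1}\prec P_i$ produces a cycle, so $[i,j]$ cannot coexist with both $P_i$ and $P_{j+1}$. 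The remaining verification, that removing some module of $E\cup F$ to accommodate such an $[i,j]$ never yields a size-$(2n-2)$ sequence, I would handle by the same cycle-detection technique applied to the enlarged set of forced relations that appear once any element of $E\cup F$ is deleted, mirroring the analysis in the lemma and Proposition \ref{keyprop2}.
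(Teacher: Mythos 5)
Your existence half is fine and is in fact more explicit than the paper, which records no argument and treats the statement as immediate from Proposition \ref{keyprop2}: identifying the algebra with $\Lambda^{n-1}_n/(\eta_n)$, noting that $\Hom$ agrees and $\ext^1_A(M,N)\subseteq\ext^1_{\Lambda^{n-1}_n}(M,N)$, and observing that $E\cup F$ consists exactly of the linear projectives $[i,n]$ and the quotients $[1,j]$ of $P_1$, does show that $(E,F)$ is a weak exceptional sequence of size $2n-2$ over the linear algebra, and your Hom-forced chain plus the $3$-cycle $P_i\prec[i,j]\prec P_{j+1}\prec P_i$ correctly forces the order and shows a ``middle'' module $[i,j]$, $2\le i\le j\le n-1$, cannot coexist with both $P_i$ and $P_{j+1}$.

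The gap is in the last step, and it is not cosmetic. First, ``full'' means maximal size, so you must bound $s$ of the \emph{linear} algebra; the bound $2n-2$ was proved in the paper only for $\Lambda^{n-1}_n$, and it does not transfer automatically, because your inclusion $\ext^1_A\subseteq\ext^1_{\Lambda^{n-1}_n}$ goes the wrong way for this purpose: a weak exceptional sequence over the quotient need not be one over $\Lambda^{n-1}_n$ (for instance $(S_1,S_n)$ is a weak exceptional pair over the linear algebra, since $S_n$ is now projective, but $\ext^1_{\Lambda^{n-1}_n}(S_n,S_1)\neq 0$). So ``mirroring the analysis in the lemma and Proposition \ref{keyprop2}'' is not available verbatim; each forcing extension has to be rechecked over the quotient, and the size bound has to be re-derived. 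Second, your cycle argument only shows that each middle module forces the deletion of \emph{some} element of $E\cup F$; it does not exclude configurations in which several middle modules share their forced deletions. Already for $n=4$, taking the middles $[2]$ and $[3]$ and deleting only $P_3=[3,4]$ and $[1,2]$ survives all of your $3$-cycles and still has $2n-2$ elements; it is killed only by longer cycles such as $[2]\prec[3]\prec P_4\prec P_2\prec[2]$ (and a dual one through $[1]$ and $[1,3]$), which your sketch never produces. Ruling out all such mixed collections, and showing no collection over the linear algebra exceeds size $2n-2$, is precisely the substantive content of the corollary, so as written the proposal defers the main point rather than proving it.
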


\begin{theorem} [Thm \ref{thm2}] The number of the full weak exceptional sequences is: $\#\left(\Lambda^{n-1}_n\right)=n$
\end{theorem}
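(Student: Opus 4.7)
The plan is to combine the uniqueness statement of Proposition~\ref{keyprop2} with the $\sigma$-invariance of weak exceptional sequences. Proposition~\ref{keyprop2} furnishes one canonical full weak exceptional sequence of size $2n-2$, namely $(E,F)$ with $E=((0,0),(0,1),\ldots,(0,n-2))$ the chain of submodules of $P_2$ listed in ascending length and $F=((1,n-2),(2,n-3),\ldots,(n-1,0))$ the chain of quotients of $P_1$ listed in descending length. Since $\sigma$ preserves weak exceptional sequences, each translate $\sigma^j(E,F)$ for $j=0,1,\ldots,n-1$ is again a full weak exceptional sequence. These $n$ tuples are pairwise distinct because the modules of $\sigma^jE$ are precisely the submodules of $P_{2+j\pmod n}$, so $j$ is recoverable from the underlying set. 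Hence $\#\!\left(\Lambda^{n-1}_n\right)\geq n$.

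For the reverse inequality, let $S$ be an arbitrary full weak exceptional sequence; necessarily $|S|=2n-2$. Adapting the argument in the preceding lemma, I would show that $S$ decomposes into (i) $n-1$ modules lying on a single diagonal line $\{a+b\equiv n-1\pmod n\}$ of $\cL(\Lambda^{n-1}_n)$, which after a suitable $\sigma^{-j}$-translation coincides with the canonical line $F$, together with (ii) $n-1$ further modules in the complementary triangular region $\{a+b\leq n-2\}$ that, being $\Hom$- and $\ext^1$-orthogonal to $F$, must form an $\mathbb{A}_{n-1}$-exceptional sequence. Proposition~\ref{keyprop2} then pins down this second part uniquely as $E$, so $S=\sigma^j(E,F)$ for a unique $j\in\{0,\ldots,n-1\}$. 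Within each such tuple the ordering is rigid: $E$ ascends in length, $F$ descends in length, and the non-vanishing of $\Hom(E_{j'+1},F_{j'})$ for each $j'$ (since the top $S_{n-j'}$ of $E_{j'+1}$ is a composition factor of $F_{j'}$) together with transitivity through the $E$-chain forces every element of $E$ to precede every element of $F$ in the sequence. Hence there are exactly $n$ ordered full weak exceptional sequences.

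The main obstacle is the structural step in the upper-bound argument: showing that the $n-1$ ``diagonal'' modules in an arbitrary full sequence form a \emph{complete} $\sigma$-translate of $F$, rather than being drawn from several distinct diagonals or constituting a proper subset of one. This reduces to verifying, via Proposition~\ref{homextortho}, that modules on two distinct diagonals cannot coexist in a weak exceptional sequence and that omitting any module from the chosen diagonal strictly reduces the achievable size below $2n-2$.
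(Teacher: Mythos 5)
Your overall strategy --- exhibit the canonical collection $(E,F)$, take its $n$ pairwise distinct $\sigma$-translates for the lower bound, and invoke Proposition~\ref{keyprop2} plus the periodicity of $\sigma$ for the upper bound --- is the same skeleton as the paper's proof, and your observation that the ordering inside $(E,F)$ is rigid (via $\Hom(E_{j'+1},F_{j'})\neq 0$) is a genuine addition that the paper leaves implicit. The problem is that the decisive step of the upper bound is missing, and you flag it yourself as ``the main obstacle'': you never prove that an arbitrary weak exceptional sequence of size $2n-2$ consists of a complete $\sigma$-translate of the line $F$ together with $n-1$ modules in the complementary triangle $\{a+b\le n-2\}$. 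In the paper that structural input is carried by the lemma at the start of Section~\ref{ch4}: after a $\sigma$-normalization, the members outside the $\mathbb{A}_{n-1}$-region lie on the single line $a+b=n-1$, $a\ge 1$, which has only $n-1$ points, so reaching size $2n-2$ forces \emph{all} of that line and a \emph{full} $\mathbb{A}_{n-1}$-sequence, at which point Proposition~\ref{keyprop2} pins the triangular part down to $E$. Without this, Proposition~\ref{keyprop2} only tells you what can accompany the specific collection $F$; it says nothing about hypothetical full sequences that use only part of a diagonal, or points spread over several diagonals, which is exactly the case you must exclude.

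Worse, the reduction you propose for closing this gap is false as stated: ``modules on two distinct diagonals cannot coexist in a weak exceptional sequence'' is contradicted by the canonical collection itself, since the $E$-part of $(E,F)$ is spread over the diagonals $a+b=0,1,\ldots,n-2$ while $F$ sits on $a+b=n-1$. What you actually need is the much weaker claim that the members not lying in (a single $\sigma$-translate of) the triangle $a+b\le n-2$ all lie on one and the same line of type $F$, together with the counting observation that all $n-1$ points of that line and a full $\mathbb{A}_{n-1}$-sequence are required to attain $2n-2$; this should be extracted from the $\Hom$/$\ext^1$ regions of Proposition~\ref{homextortho} (and Proposition~\ref{atype} for the triangular part), but your proposal neither states this correctly nor proves it. So the lower bound and the ordering-rigidity remarks stand, but the uniqueness half of the count --- the actual content of the theorem --- is not established.
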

\begin{proof}

We need to show that with respect to $\sigma$-action, there is only one full weak exceptional collection. But, this is a corollary of proposition \ref{keyprop2}. Because, any full weak exceptional collection is obtained by removing the unique projective module of $E$ in $\Lambda^n_n$ and then adding modules to it. The total number has to be $n$ by periodicity of $\sigma$.
\end{proof}

\section{Study of $\#\left(\Lambda^{2}_n\right)$}\label{ch5}

Here, we aim to prove the theorem \ref{thm3}. Parity of $n$ leads distinct results. Therefore we have two main cases which we analyze $n$ is odd (proposition \ref{nisodd}) and even (theorem \ref{niseven}) separately. For simplicity we use $[i]$ and $[i,i+1]$ to show simple module $S_i$ and projective module $P_i$ respectively. It is clear that $[n,1] $ is $P_n$.
\subsection{n is odd}
It is enough to show that there is only one full exceptional sequence with respect to $\sigma$ action. To visualize them we introduce 'bone' method: Assume that $n=3$, we can identify the weak exceptional sequence $\left([3],[2,3],[1,2],[1]\right)$ by the following figure:

\begin{center}
\begin{align}\label{bone}
\begin{tikzpicture}
\draw (0,0) node {$\bullet$}--(1,1) node {$\bullet$}--(2,1) node {$\bullet$} -- (3,0) node {$\bullet$};
\end{tikzpicture}
\end{align}

\end{center}

If we choose $n=5$, an obvious weak exceptional collection is $\left([5],[4,5],[3,4],[2,3],[1,2],[1]\right)$. However it is not full, we can add one more module $[3]$, and bone will be :

\begin{center}

\begin{tikzpicture}
\draw (0,0) node {$\bullet$}--(1,1) node {$\bullet$}--(2,1) node {$\bullet$} -- (3,0) node {$\bullet$}--(4,1) node {$\bullet$} -- (5,1) node {$\bullet$} -- (6,0) node {$\bullet$} ;
\draw[dotted] (2,1) -- (4,1);

\end{tikzpicture}
\end{center}

We give one more example, if $n=7$:

\begin{center}

\begin{tikzpicture}
\draw (0,0) node {$\bullet$}--(1,1) node {$\bullet$}--(2,1) node {$\bullet$} -- (3,0) node {$\bullet$}--(4,1) node {$\bullet$} -- (5,1) node {$\bullet$} -- (6,0) node {$\bullet$}-- (7,1)node {$\bullet$} --  (8,1)node {$\bullet$} --  (9,0)node {$\bullet$}  ;

\end{tikzpicture}
\end{center}

\begin{proposition}[Thm \ref{thm3} a]\label{nisodd} The size and the number of weak exceptional sequences of type $\Lambda^2_{2k+1}$ is given by $s\left(\Lambda^2_{2k+1}\right)=3k+1$,
$\#\left(\Lambda^2_{2k+1}\right)=2k+1$ 
\end{proposition}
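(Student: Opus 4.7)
The plan is in three stages: exhibit the bone as a full weak exceptional sequence of size $3k+1$, prove the upper bound via a combinatorial optimization on pair-constraints, and deduce uniqueness up to the $\sigma$-action, giving the count $n = 2k+1$.

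For the lower bound I take the explicit sequence $E^{*} = (S_{n}, P_{n-1}, P_{n-2}, S_{n-2}, P_{n-3}, P_{n-4}, S_{n-4}, \ldots, P_{2}, P_{1}, S_{1})$, consisting of the $k+1$ odd-index simples and all $2k$ projectives except $P_{n}$, in descending index order. Pairwise verification via Proposition \ref{homextortho} splits into four families: simple-simple pairs (two odd indices $a > b$ satisfy $a - b \in \{2, 4, \ldots, 2k\}$, so $a \not\equiv b+1 \bmod n$ and both Hom and Ext vanish); projective-projective pairs (strictly decreasing indices miss the only nonvanishing $\Hom(P_{b}, P_{a})$, and $\ext^{1}$ vanishes since $\Lambda^{2}_{n}$ is selfinjective); and the two mixed types, where each check reduces to inspecting the three modules in the relevant Hom-region and noting that the sequence's block pattern $S_{c}, P_{c-1}, P_{c-2}$ places $S_{c}$ just before $P_{c-1}$ and $P_{c-2}$, and $S_{c}$ is always strictly later in the index than the projectives that follow it, which avoids every forbidden configuration.

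For the upper bound, let $F$ be any weak exceptional sequence and set $V = \{j : S_{j} \in F\}$ and $W = \{j : P_{j} \in F\}$ in $\Zz/n$. Proposition \ref{homextortho} shows that the only pairwise ordering arrows are $S_{j} \to S_{j+1}$, $S_{j+1} \to P_{j}$, $P_{j} \to S_{j}$, and $P_{j+1} \to P_{j}$, which yield three necessary conditions: $V \neq \Zz/n$ (to linearize the cyclic chain of simples), $W \neq \Zz/n$ (likewise for projectives), and $\{j, j+1\} \not\subseteq V$ whenever $j \in W$ (otherwise $\{S_{j}, S_{j+1}, P_{j}\}$ is a $3$-cycle in the ordering DAG). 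Writing $c(V)$ for the number of cyclically adjacent pairs in $V$, the triple condition gives $|W| \leq n - c(V)$, and a block decomposition of $V$ (into $m$ maximal runs with $|V| = c(V) + m$ and total gap size $n - |V| \geq m$) gives $|V| \leq (c(V) + n)/2$. Summing, $|F| \leq (3n - c(V))/2$; for $n = 2k+1$ odd this is at most $3k+1$, attained only at $c(V) = 1$ (the case $c(V) = 0$ triggers the sharper $|W| \leq n - 1$ and gives at most $3k$).

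For uniqueness, equality $|F| = 3k+1$ forces $c(V) = 1$, $|V| = k+1$, and $|W| = 2k$; a short enumeration shows $V$ must be one of the $n$ cyclic rotations of $\{1, 3, \ldots, 2k+1\}$, and then $W$ is forced to equal $\Zz/n$ minus the unique index $j$ with $\{j, j+1\} \subseteq V$. On each such $V \cup W$ the restricted constraint DAG has a unique source at each step of topological sorting, so the induced linear ordering is uniquely determined. Hence the $n$ cyclic rotations of $E^{*}$ exhaust the full weak exceptional sequences, proving $\#(\Lambda^{2}_{2k+1}) = n = 2k+1$. The main obstacle is establishing sufficiency of the three necessary conditions: any cycle in the constraint DAG with at least one crossing between the simple- and projective-layers must contain a forbidden triple (by tracking net layer-change and position-shift), so only the two layer-cycles of length $n$ remain, and they are excluded by the first two conditions.
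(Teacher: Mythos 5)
Your proof is correct, but it takes a genuinely different route from the paper. The paper argues by induction on $n$: it verifies $n=3$ directly, notes that a full collection of $\Lambda^2_n$ stays weak exceptional over $\Lambda^2_{n+2}$, and asserts that of the four new modules $[n+2],[n+1,n+2],[n+1],[n,n+1]$ only the triple $[n+2],[n+1,n+2],[n,n+1]$ can be adjoined, which gives both the size $3k+1$ and uniqueness up to $\sigma$, hence the count $2k+1$. You instead argue non-inductively: since $\End(M)\cong k$ forces indecomposability, every member is a simple or a projective, and the complete list of forced precedences ($S_j$ before $S_{j+1}$, $S_{j+1}$ before $P_j$, $P_j$ before $S_j$, $P_{j+1}$ before $P_j$) yields the three acyclicity conditions $V\neq\Zz/n$, $W\neq\Zz/n$, and $j\notin W$ whenever $\{j,j+1\}\subseteq V$; the block decomposition then gives $|V|+|W|\leq(3n-c(V))/2$, which for odd $n$ caps the size at $3k+1$ with equality only when $V$ is a rotation of the odd residues and $W$ omits exactly the wrap-around index, and the unique linear extension on each such configuration (the constraint relation contains the Hamiltonian chain $S_n,P_{n-1},P_{n-2},S_{n-2},\dots,P_1,S_1$, which is your witness $E^{*}$ and the paper's bone) gives the count $n=2k+1$. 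Your approach buys an explicit upper-bound inequality and a uniqueness argument that does not rest on the paper's unproved assertion about which of the four new modules can be added; the paper's induction buys brevity and sets up the bone-gluing machinery reused in the even case. Two of your steps are asserted rather than written out --- the mixed-pair verification of $E^{*}$ and the ``unique source at each step'' claim --- but both are routine checks (the latter follows at once from the Hamiltonian chain), and your closing discussion of sufficiency of the three conditions is not actually needed, since realizability of the extremal configurations is already witnessed by $E^{*}$ and its $\sigma$-translates.
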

\begin{proof}
We use induction. One can verify directly that $\left([3],[2,3],[1,2],[1]\right)$ is a full exceptional sequence for $\Lambda^2_3$.\\
Notice that a full weak exceptional collection $E=\left([n],[n-1,n],\ldots,[1,2],[1]\right)$ of $\Lambda^2_n$ is also a weak exceptional collection for $\Lambda^2_{n+2}$. The $\Lambda^2_{n+2}$ modules we can add to the collection $E$ are $[n+2],[n+1,n+2],[n+1],[n,n+1]$. It is impossible to add four of them. One can add at most three of them and the only option is $[n+2],[n+1,n+2],[n,n+1]$. Hence the claims follow. 
\end{proof}

\subsection{n is even}
This is drastically different than the previous case. First we give analysis of $n=4$ case in details and later we develop method to join 'bones'. There are $4$ full exceptional collections by direct computation and we list them below:
\begin{enumerate}[label=\arabic*)]
\item 
\begin{tikzpicture}
\draw (0,0) node {$\bullet$}--(1,1) node {$\bullet$}--(2,1) node {$\bullet$} -- (3,0) node {$\bullet$} --(4,1) node {$\bullet$}  (8,0.5) node {$[4],[3,4],[2,3],[2],[1,2]$};
\end{tikzpicture}
\item 
\begin{tikzpicture}
\draw (0,0) node {$\bullet$}--(1,1) node {$\bullet$}--(2,1) node {$\bullet$} -- (3,0) node {$\bullet$}  (4,0) node {$\bullet$}  (8,0.5) node {$[4],[3,4],[2,3],[2],[1]$};
\end{tikzpicture}\\
Notice that, in the given ordering, it is not an exceptional sequence, however if we shift module $[1]$ to the left of $[2]$, we get many braid equivalent exceptional collections. We study some properties of these in propositions \ref{typesVY}, \ref{numberofbodies}, and \ref{numberofbodies2}.
\item 
\begin{tikzpicture}
\draw (0,0) node {$\bullet$}--(1,1) node {$\bullet$}--(2,1) node {$\bullet$} -- (3,1) node {$\bullet$} --(4,0) node {$\bullet$}  (8,0.5) node {$[4],[3,4],[2,3],[1,2],[1]$};
\end{tikzpicture}
\item 
\begin{tikzpicture}
\draw (0,1) node {$\bullet$}--(1,0) node {$\bullet$}--(2,1) node {$\bullet$} -- (3,1) node {$\bullet$} --(4,0) node {$\bullet$}  (8,0.5) node {$[3,4],[3],[2,3],[1,2],[1]$};
\end{tikzpicture}
\end{enumerate}

To use induction for the enumerative problem, we introduce a gluing method. We describe two fundamental bones $Y$ and $V$ types as:
\begin{center}
\begin{tikzpicture}
\draw (0,1) node {$\bullet$}--(1,0) node {$\bullet$}--(2,1) node {$\bullet$}  (5,0) node {$\bullet$} --(6,1) node {$\bullet$} --(7,1 ) node {$\bullet$};
\draw (1,-1) node {V-bone\label{V}} (6,-1) node {Y-bone\label{Y}};
\end{tikzpicture}
\end{center}

They are in one to one correspondence with the exceptional collections:
\begin{enumerate}[label=\roman*)]
\item V-bone presents a projective module $P$, its subquotient $\faktor{P}{rad P}$ and one step shift $\sigma(P)$, i.e. $\left(P,\,\,\faktor{P}{rad P},\,\,\sigma(P)\right)$
\item Y-bone presents a simple module $S$, its injective envelope $I(S)$ and one step shift of it $\sigma(I(S))$ i.e. $\left(S,I(S),\sigma(S)\right)$.
\end{enumerate}

Here we describe explicitly all full exceptional sequences for $n=6$ by using the case $n=4$ and joint method for bones.

Y-bone can be joined to any full exceptional collection of $n=4$:

\begin{center}
\begin{tikzpicture}
\draw (0,0) node {$\bullet$}--(1,1) node {$\bullet$}--(2,1) node {$\circ$} (3,0.5) node {$\biguplus$}  (4,0) node {$\circ$} --(5,1) node {$\bullet$} -- (6,1) node {$\bullet$} --(7,0)node {$\bullet$}--(8,1)node {$\bullet$} (9,0.5) node {$\equiv$};
\end{tikzpicture}\vspace{1cm}
\begin{tikzpicture}
\draw (0,0) node {$\bullet$}--(1,1) node {$\bullet$}--(2,1) node {$\circ$}   (3,0) node {$\circ$} --(4,1) node {$\bullet$} -- (5,1) node {$\bullet$} --(6,0)node {$\bullet$}--(7,1)node {$\bullet$};
\draw[red,thick] (2,1 ) node {\textcolor{red}{$\bullet$}}--(3,0) node {\textcolor{red}{$\bullet$}} ;

\end{tikzpicture}\\
Collection is $\left([6],[5,6],[4,5],[4],[3,4],[2,3],[2],[1,2]\right)$
\end{center}

Other cases:

\begin{center}
\begin{tikzpicture}
\draw (0,0) node {$\bullet$}--(1,1) node {$\bullet$}--(2,1) node {$\circ$} (3,0.5) node {$\biguplus$}  (4,0) node {$\circ$} --(5,1) node {$\bullet$} -- (6,1) node {$\bullet$} --(7,0)node {$\bullet$} (8,0)node {$\bullet$} (9,0.5) node {$\equiv$};
\end{tikzpicture}\vspace{1cm}
\begin{tikzpicture}
\draw (0,0) node {$\bullet$}--(1,1) node {$\bullet$}--(2,1) node {$\circ$}   (3,0) node {$\circ$} --(4,1) node {$\bullet$} -- (5,1) node {$\bullet$} --(6,0)node {$\bullet$} (7,0)node {$\bullet$};
\draw[red,thick] (2,1 ) node {\textcolor{red}{$\bullet$}}--(3,0) node {\textcolor{red}{$\bullet$}} ;

\end{tikzpicture}\\
Collection is $\left([6],[5,6],[4,5],[4],[3,4],[2,3],[2],[1]\right)$
\end{center}

\vspace{2cm}

\begin{center}
\begin{tikzpicture}
\draw (0,0) node {$\bullet$}--(1,1) node {$\bullet$}--(2,1) node {$\circ$} (3,0.5) node {$\biguplus$}  (4,0) node {$\circ$} --(5,1) node {$\bullet$} -- (6,1) node {$\bullet$} --(7,1)node {$\bullet$}--(8,0)node {$\bullet$} (9,0.5) node {$\equiv$};
\end{tikzpicture}\vspace{1cm}
\begin{tikzpicture}
\draw (0,0) node {$\bullet$}--(1,1) node {$\bullet$}--(2,1) node {$\circ$}   (3,0) node {$\circ$} --(4,1) node {$\bullet$} -- (5,1) node {$\bullet$} --(6,1)node {$\bullet$}--(7,0)node {$\bullet$};
\draw[red,thick] (2,1 ) node {\textcolor{red}{$\bullet$}}--(3,0) node {\textcolor{red}{$\bullet$}} ;

\end{tikzpicture}\\
Collection is $\left([6],[5,6],[4,5],[4],[3,4],[2,3],[1,2],[1]\right)$
\end{center}

\begin{center}
\begin{tikzpicture}
\draw (0,0) node {$\bullet$}--(1,1) node {$\bullet$}--(2,1) node {$\circ$} (3,0.5) node {$\biguplus$}  (4,1) node {$\circ$} --(5,0) node {$\bullet$} -- (6,1) node {$\bullet$} --(7,1)node {$\bullet$}--(8,0)node {$\bullet$} (9,0.5) node {$\equiv$};
\end{tikzpicture}\vspace{1cm}
\begin{tikzpicture}
\draw (0,0) node {$\bullet$}--(1,1) node {$\bullet$}--(2,1) node {$\circ$}   (3,1) node {$\circ$} --(4,0) node {$\bullet$} -- (5,1) node {$\bullet$} --(6,1)node {$\bullet$}--(7,0)node {$\bullet$};
\draw[red,thick] (2,1 ) node {\textcolor{red}{$\bullet$}}--(3,1) node {\textcolor{red}{$\bullet$}} ;

\end{tikzpicture}\\
Collection is $\left([6],[5,6],[4,5],[3,4],[3],[2,3],[1,2],[1]\right)$
\end{center}

There is one more full exceptional sequence which starts by $[6]$. We can put a discrete dot in front of module $[4]$ since $[1]$ and $[4]$ cannot extend each other in $\Lambda^2_6$. Therefore we need to consider the following joint operation:

\begin{center}
\begin{tikzpicture}
\draw (0,0) node {$\bullet$}--(1,1) node {$\bullet$}--(2,1) node {$\circ$} (3,0.5) node {$\biguplus$}  (4,0) node {$\bullet$} (5,0) node {$\bullet$} -- (6,1) node {$\bullet$} --(7,1)node {$\bullet$}--(8,0)node {$\bullet$} (9,0.5) node {$\equiv$};
\end{tikzpicture}\vspace{1cm}
\begin{tikzpicture}
\draw (0,0) node {$\bullet$}--(1,1) node {$\bullet$}--(2,1) node {$\circ$}  -- (3,0) node {$\circ$} (4,0) node {$\bullet$} -- (5,1) node {$\bullet$} --(6,1)node {$\bullet$}--(7,0)node {$\bullet$};
\draw[red,thick] (2,1 ) node {\textcolor{red}{$\bullet$}}--(3,0) node {\textcolor{red}{$\bullet$}} ;

\end{tikzpicture}\\
Collection is $\left([6],[5,6],[4,5],[4],[3],[2,3],[1,2],[1]\right)$
\end{center}

This gives all possible bones starting by simple module $[6]$. This is equivalent to all possible joints by $Y$ bone.
There is only one possible joint for $V$ bone :
\begin{center}
\begin{tikzpicture}
\draw (0,1) node {$\bullet$}--(1,0) node {$\bullet$}--(2,1) node {$\circ$} (3,0.5) node {$\biguplus$}  (4,1) node {$\circ$} --(5,0) node {$\bullet$} -- (6,1) node {$\bullet$} --(7,1)node {$\bullet$}--(8,0)node {$\bullet$} (9,0.5) node {$\equiv$};
\end{tikzpicture}\vspace{1cm}
\begin{tikzpicture}
\draw (0,1) node {$\bullet$}--(1,0) node {$\bullet$}--(2,1) node {$\circ$}   (3,1) node {$\circ$} --(4,0) node {$\bullet$} -- (5,1) node {$\bullet$} --(6,1)node {$\bullet$}--(7,0)node {$\bullet$};
\draw[red,thick] (2,1 ) node {\textcolor{red}{$\bullet$}}--(3,1) node {\textcolor{red}{$\bullet$}} ;

\end{tikzpicture}\\
Collection is $\left([5,6],[5],[4,5],[3,4],[3],[2,3],[1,2],[1]\right)$
\end{center}

We cannot add $V$ type bone to a bone which starts by $Y$ type bone, so there is only one case.

\begin{proposition}\label{typesVY}
If $n$ is even, there are $n-1$ distinct bones which present full weak exceptional collections starting by module $[n]$ i.e. $Y$ type, and there is one bone which stands for exceptional collection starting by module $[n-1,n]$ i.e. $V$ type.
\end{proposition}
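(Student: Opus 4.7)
I plan to prove this by induction on $k$ where $n=2k$, using the joining construction sketched in the examples preceding the statement. The base case $n=4$ is exactly the direct enumeration given above: three bones start with $[4]$ and one starts with $[3,4]$. The inductive hypothesis is that $\Lambda^2_{n-2}$ has $n-3$ bones starting with $[n-2]$ and one bone starting with $[n-3,n-2]$.

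For the inductive step, first consider bones of $\Lambda^2_n$ starting with the simple module $[n]$. Using proposition \ref{homextortho}, the weak exceptional conditions force the initial three entries to be $[n], [n-1,n], [n-2,n-1]$ (a Y-bone), since these are the only modules lying in the appropriate Hom/Ext-vanishing region to the right of $[n]$ in $\cL(\Lambda^2_n)$. After these three modules, the remaining allowable modules lie in a subregion that is identifiable, via proposition \ref{atype} and the $\sigma$-translation invariance of the shapes, with $\cL(\Lambda^2_{n-2})$. The residual collection is therefore either (i) a full weak exceptional collection of $\Lambda^2_{n-2}$ (of which there are $(n-3)+1 = n-2$ bones by the inductive hypothesis, each joining to the initial Y-bone to form a valid bone of $\Lambda^2_n$), or (ii) the special "discrete dot" configuration in which the simple $[n-2]$ appears as an isolated node between the initial Y-bone and a shorter continuation, as illustrated in the fifth $n=6$ example above. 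This gives $(n-2) + 1 = n-1$ bones in total starting with $[n]$.

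Next consider bones starting with the projective module $[n-1,n]$. Proposition \ref{homextortho} similarly forces the initial entries to be the V-bone $[n-1,n], [n-1], [n-2,n-1]$. As remarked just before the statement, a V-type bone cannot be joined to a Y-start continuation, so the remaining collection must itself be of V-start type in the sublattice identifiable with $\cL(\Lambda^2_{n-2})$. By the inductive hypothesis there is exactly one such, yielding exactly one V-start bone in $\Lambda^2_n$.

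The principal obstacle is rigorously confirming two points via lattice analysis with proposition \ref{homextortho}: (a) that the initial bone type (Y or V) is indeed forced by the choice of starting module together with the weak exceptional conditions, and (b) that the discrete-dot configuration is the \emph{unique} additional possibility in the Y-start case, and that no analogous "discrete dot" possibility exists in the V-start case (because the projective $[n-1,n]$ in the initial V-bone constrains the continuation more tightly than the simple $[n]$ does). Once these forcing statements are established, the bijection between bones of $\Lambda^2_n$ of a given start-type and the bones of $\Lambda^2_{n-2}$ (augmented by the single discrete-dot case for the Y-start) is immediate, and the counting $n-1$ and $1$ follows directly from the inductive hypothesis.
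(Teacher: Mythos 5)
Your argument is essentially the paper's own proof run in the opposite direction: the paper inducts upward from the base cases $n=4,6$ by joining a $Y$-bone to each of the $n$ bones of $\Lambda^2_n$, adding the one extra $Y$-start ``discrete dot'' bone and the unique $V$-bone extension, whereas you peel off the forced initial $Y$- or $V$-bone and identify the continuation with a bone of $\Lambda^2_{n-2}$; the resulting counts $(n-1)$ and $1$ agree. The forcing statements you flag as still needing lattice verification are asserted at the same (in fact lesser) level of detail in the paper, which leans on the explicit $n=4,6$ enumerations and the gluing description, so your proposal is correct and takes essentially the same approach.
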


\begin{proof}
Statement holds for $n=4,6$ by direct computation as we discussed. Assume that this holds for $n$. To reach $n+2$ case, all bones can be joined by $Y$ type, which gives $n$ distinct types. There is only one extension by $V$ bone. There is one remaining type, which is another extension by $Y$ type. Notice that in $\Lambda^2_{n+2}$, modules $[n]$ and $[1]$ cannot extend each other as analogous to the case \ref{nisodd}. Therefore, in total we get $n+2$ distinct bones.
\end{proof}

To count the number of exceptional collections, we need to know the structure of all possible bones. It turns out that there can be at most two main bodies.

\begin{proposition}\label{numberofbodies} A bone of $\Lambda^2_n$ cannot have more than two main bodies.
\end{proposition}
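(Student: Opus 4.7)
My plan rests on a short triangle lemma: \emph{for each $i\in\{1,\ldots,n\}$, any weak exceptional collection of $\Lambda^2_n$ contains at most two of the three modules $P_i,S_i,S_{i+1}$.} Establishing this lemma is the main conceptual step. The idea is that if all three were present, then by Proposition~\ref{homextortho} we would have $\Hom(P_i,S_i)\neq 0$, $\ext^1(S_i,S_{i+1})\neq 0$, and $\Hom(S_{i+1},P_i)\neq 0$ (the first and third because $S_i=P_i/\mathrm{rad}(P_i)$ and $S_{i+1}=\mathrm{soc}(P_i)$, the middle because $P_i$ is the non-split extension of $S_i$ by $S_{i+1}$). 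In a weak exceptional sequence these non-vanishings would translate into the cyclic chain $P_i<S_i<S_{i+1}<P_i$ (where $<$ means ``appears earlier in the sequence''), which is impossible.

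Once the triangle lemma is in hand, I would sum it over $i$ — each simple belongs to two triples and each projective to one — to obtain $2q+p\leq 2n$, where $q,p$ denote the numbers of simples and projectives in the collection. Combined with the fullness condition $p+q=s(\Lambda^2_n)=3k\pm 1$, this bounds the number of absent projectives: at most $2$ when $n=2k$ and at most $1$ when $n=2k+1$.

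Finally, I would interpret a main body as a connected component of the collection inside the ``linking graph'' of $\Lambda^2_n$, i.e.\ the graph on indecomposables whose edges join pairs sharing a simple composition factor. In this graph the projectives form a cycle $P_1\!-\!P_2\!-\!\cdots\!-\!P_n\!-\!P_1$ and each simple $S_i$ attaches only to $P_{i-1}$ and $P_i$. Removing $a$ absent projectives splits the cycle into $\max(a,1)$ arcs, each contributing exactly one main body; a simple becomes a discrete body only if both of its adjacent projectives are missing, i.e.\ if two \emph{consecutive} projectives are absent. A short case analysis on $a\in\{0,1,2\}$ for $n=2k$ (sub-cases $a\leq 1$, $a=2$ consecutive, and $a=2$ non-consecutive) and on $a\in\{0,1\}$ for $n=2k+1$ then bounds the number of main bodies by $2$ (respectively $1$), completing the proof. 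The hard part is really only the triangle lemma; the rest is bookkeeping.
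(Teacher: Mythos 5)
Your proposal is correct, but it proves the statement by a genuinely different route than the paper. The paper's own proof is a two-line induction riding on the iterative classification of bones in Proposition~\ref{typesVY}: every bone of $\Lambda^2_{n+2}$ is obtained from a bone of $\Lambda^2_n$ by gluing a $Y$- or $V$-bone, and this gluing cannot push the number of main bodies past two. Your argument is instead direct and structural: the triangle lemma (at most two of $P_i$, $S_i$, $S_{i+1}$ can coexist, via the cyclic chain of nonvanishing $\Hom$/$\ext^1$, which is correct with the paper's convention $\mathrm{top}(P_i)=S_i$, $\mathrm{soc}(P_i)=S_{i+1}$), the count $p+2q\leq 2n$, and the observation that in the graph of shared composition factors the projectives form an $n$-cycle with each simple hanging on two consecutive projectives; your case analysis on the number of absent projectives then correctly yields at most two components (and only one when $n$ is odd). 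Two remarks. First, ``main body'' is never formally defined in the paper; your identification of it with a connected component of the composition-factor-sharing graph is the right reading (it matches all the drawn bones and the counts in Lemma~\ref{numberofbodies2}), but you should state it as a definition. Second, you invoke $p+q=s(\Lambda^2_n)=3k\pm1$; to avoid any appearance of circularity with Theorem~\ref{thm3} you only need the lower bound $s(\Lambda^2_{2k})\geq 3k-1$ (resp.\ $3k+1$ for odd $n$), which follows from the explicitly exhibited collections and is all your inequality uses. What your route buys is a self-contained, classification-free proof together with the sharper structural information that a full collection misses at most two (necessarily describable) projectives; what the paper's route buys is brevity, at the cost of leaning on the informal inductive bone classification.
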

\begin{proof}
We prove it by induction. We identified all possible weak exceptional sequences by bones and construction is iterative i.e. extension of weak exceptional sequences of $\Lambda^2_{n-2}$ by $Y$ bone and $V$ bone cannot increase the number of main bodies, by the induction hypothesis, it has to be at most two.
\end{proof}

Now by using star-bar combinatorics we can count all possible exceptional sequences indexed by bodies. If there is only one main body, braid group acts trivially. Therefore we need to understand two body cases.

\begin{proposition}\label{braid1}
If there are two main bodies, the number of exceptional collections is given by:
\begin{align}\label{c}
c(x,y)=\sum\limits^{x}_{j=2}\binom{y+j-2}{j-1}=\binom{y+x-1}{y}-1
\end{align}
where $x$ and $y$ count the number of black dots i.e modules in each main body.
\end{proposition}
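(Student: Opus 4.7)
The plan is to argue that within each main body the order of modules in any exceptional sequence is rigid, and then reduce the counting to enumerating admissible interleavings of two ordered chains of lengths $x$ and $y$ via a stars-and-bars argument, with the closed form following from hockey-stick.

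First I would examine the local structure of each body produced by the iterative $Y$- and $V$-gluing construction developed above. Every main body consists of a linear chain of modules whose bone edges encode nontrivial morphisms or extensions; by Proposition~\ref{homextortho} each consecutive pair in the chain has a nonzero $\Hom$ or $\ext^1$ in exactly one direction, so no two modules of the \emph{same} body can be swapped in an exceptional sequence. Consequently, fixing the bone fixes two totally ordered tuples $(a_1,\ldots,a_x)$ and $(b_1,\ldots,b_y)$, and every exceptional sequence supported on the bone is a shuffle of these two tuples.

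Next I would describe which shuffles are admissible. A second application of Proposition~\ref{homextortho} to modules taken from \emph{different} bodies shows that there is exactly one forbidden configuration: the extreme shuffle in which one body is entirely concatenated before the other. Every other interleaving satisfies the vanishing conditions, because all remaining extensions between the bodies have been killed by the bone's geometry (the two bodies sit in disjoint translated $\mathbb{A}$-type regions in the sense of Proposition~\ref{atype}, and the forbidden order is the unique one in which the tail of one body attempts to extend the head of the other). I would then parameterize the admissible shuffles by letting $j$ denote the number of maximal consecutive runs of body-one modules in the sequence, so $j$ ranges over $2,3,\ldots,x$; given $j$, the $y$ body-two modules must be distributed among the $j-1$ internal gaps and possibly one boundary gap, which by stars-and-bars yields $\binom{y+j-2}{j-1}$ arrangements.

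Summing these counts and invoking the hockey-stick identity with the substitution $i=j-1$ then produces
\[
c(x,y)=\sum_{j=2}^{x}\binom{y+j-2}{j-1}=\sum_{i=1}^{x-1}\binom{y+i-1}{i}=\binom{y+x-1}{x-1}-1=\binom{y+x-1}{y}-1,
\]
which is the claimed closed form.

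The main obstacle I expect is the second step: rigorously ruling out the single forbidden shuffle and simultaneously verifying that \emph{every} other interleaving is genuinely exceptional. This requires a careful case analysis using Proposition~\ref{homextortho} applied to pairs of modules drawn from the two bodies; in particular, one must check that the only obstruction to an arbitrary shuffle comes from the boundary modules whose $\Hom$ or $\ext^1$-vanishing fails when one body is placed entirely after the other. Once this geometric lemma is in place, the combinatorial count via $j$ and the hockey-stick simplification are routine.
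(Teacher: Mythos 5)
Your structural setup matches the paper's: the order of modules within each body is rigid, the sequence is a shuffle of two ordered chains, and the unique obstruction is the shuffle in which the first body sits entirely to the left of the second (equivalently, in the paper's phrasing, the last module $E_x$ of the first body must lie to the right of the first module $F_1$ of the second). One small omission: you should state explicitly that you normalize by the transitive $\sigma$-action so that $E_1$ is the first module of the sequence; this is what rules out body-two modules appearing before $E_1$ and is why there is only a trailing boundary gap, and it is also what prevents double counting when the factor $2k$ is introduced later.

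The genuine problem is your stars-and-bars step. If $j$ denotes the number of maximal consecutive runs of body-one modules, then the number of admissible shuffles with exactly $j$ runs is \emph{not} $\binom{y+j-2}{j-1}$: one must also choose where the chain $(E_1,\dots,E_x)$ breaks into its $j$ runs, and the $j-1$ internal gaps must each receive at least one body-two module, giving $\binom{x-1}{j-1}\binom{y}{j-1}$ shuffles for fixed $j$ (which sums to $\binom{x+y-1}{y}-1$ by Vandermonde, not by hockey-stick). A concrete check: for $x=3$, $y=1$ the two admissible sequences $E_1F_1E_2E_3$ and $E_1E_2F_1E_3$ both have $j=2$ runs, whereas your formula assigns one sequence to $j=2$ and one to $j=3$. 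The summand $\binom{y+j-2}{j-1}$ instead comes from the paper's parameterization: fix the position of $F_1$ so that exactly $j-1$ modules of the first body lie to its right; the remaining $y-1$ modules of the second body are then distributed freely into the $j$ gaps to the right of $F_1$, giving $\binom{(y-1)+j-1}{j-1}=\binom{y+j-2}{j-1}$, and summing over $j=2,\dots,x$ yields $c(x,y)$. Alternatively, your own framework gives the closed form immediately without any decomposition: with $E_1$ fixed first there are $\binom{x+y-1}{y}$ shuffles of the two chains, and exactly one of them is forbidden, so the count is $\binom{x+y-1}{y}-1$; either repair is fine, but as written the per-$j$ count is incorrect even though the total happens to agree.
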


\begin{proof}
Assume that weak exceptional sequences of the first and the second bodies are $E=(E_1,\ldots,E_x)$ and $F=(F_1,\ldots,F_y)$. To use transitive action of $\sigma$, we fix the position of $E_1$, it is the first module in the exceptional sequence obtained by the unification of $E$ and $F$. The unified sequence is exceptional if the following conditions satisfied:
\begin{itemize}
\item The order of modules coming from the same body should be preserved.
\item Position of the last module $E_x$ should be at the right side of the first module $F_1$, otherwise there is nontrivial extension between them.
\end{itemize}

Now, we can list possible cases and then count them. Assume that $F_1$ is between $E_{x-1}$ and $E_x$. To replace remaining $F_i$s there are two blocks and the total number is $y$ by stars-bars combinatorics. Similarly, if $F_1$ is between $E_{x-2}$ and $E_{x-1}$, there are three blocks to put $y-1$ elements, hence $\binom{y+1}{2}$. In general, if $F_1$ is between $E_{x-i+1}$ and $E_{x-i}$ then there are $i+1$ blocks to distribute $y-1$ elements which gives $\binom{y+i-1}{i}$.
Therefore the sum $c(x,y)=\sum\limits^{x}_{j=2}\binom{y+j-2}{j-1}$ gives the total number of weak exceptional sequences associated to bone of two main bodies of sizes $x$, $y$. And simple computation involving sums of binomial coefficients verifies the last equality.
\end{proof}

\begin{lemma}\label{numberofbodies2} The number of bones which has one main bodies is $\cfrac{n}{2}+1$.  
The number of bones which has two main bodies is $\cfrac{n}{2}-1$. 
\end{lemma}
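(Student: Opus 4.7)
The approach is induction on the even integer $n\geq 4$, leveraging the iterative construction of bones from the proof of Proposition \ref{typesVY}. Write $a_n$ for the number of bones with one main body and $b_n$ for the number with two main bodies; these are exhaustive by Proposition \ref{numberofbodies}, so $a_n + b_n = n$ by Proposition \ref{typesVY}. For the base case $n = 4$, inspection of the four bones enumerated at the start of this subsection gives $a_4 = 3$ (items (1), (3), (4)) and $b_4 = 1$ (item (2), with the isolated dot $[1]$), matching the claim.

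For the inductive step, assume $a_n = n/2 + 1$ and $b_n = n/2 - 1$. The proof of Proposition \ref{typesVY} organises the $n+2$ bones at level $n+2$ into three disjoint groups: (a) the $n$ regular $Y$-extensions of the $n$ level-$n$ bones; (b) one $V$-extension, applied to the unique level-$n$ $V$-type bone; (c) one isolated $Y$-extension. The decisive claim is that operations (a) and (b) preserve the number of connected components of the bone, whereas operation (c) increments it by exactly one. For (a) and (b), the joining figures show that the new piece is grafted onto the existing bone at a single shared vertex, so the component count is preserved, and since (b) acts on the (one-body) $V$-type bone it returns a one-body bone. For (c), the non-extension property $\ext^1([n],[1]) = 0 = \ext^1([1],[n])$ in $\Lambda^2_{n+2}$, the even analogue of the feature exploited in Proposition \ref{nisodd}, forces a gap at exactly one location between the modules indexed by $[n]$ and $[1]$ in the new bone, producing one additional connected component; Proposition \ref{numberofbodies} then forces this extension to be applied to a one-body bone (else the output would have three bodies), producing a two-body bone.

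Assembling these facts, the three groups contribute $(a_n, b_n)$, $(1,0)$ and $(0,1)$ respectively to the pair $(a_{n+2}, b_{n+2})$, yielding the recursions $a_{n+2} = a_n + 1$ and $b_{n+2} = b_n + 1$. Induction then gives $a_n = n/2 + 1$ and $b_n = n/2 - 1$, as claimed. The main obstacle will be rigorously confirming the body-count behaviour of operation (c); I would handle this by tracing the overlap/gap pattern in the figures used in the proof of Proposition \ref{typesVY} to pin down the exact position of the forced gap, and then invoking Proposition \ref{numberofbodies} to exclude the impossible three-body case arising from applying the isolated extension to an already disconnected bone.
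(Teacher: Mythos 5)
Your proof is correct and takes essentially the same route as the paper: induction on even $n$ using the classification of bone extensions from Proposition \ref{typesVY} (the $Y$-joins of all level-$n$ bones, the single $V$-join, and the one extra ``gapped'' $Y$-join), tracking how each operation affects the number of main bodies. You merely spell out the bookkeeping (that $Y$- and $V$-joins preserve the body count while the gapped extension creates exactly one new body) that the paper's one-line induction leaves implicit.
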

\begin{proof}
We use induction. To get two bodies in $\Lambda^2_{n+2}$, $Y$ bone extensions should be applies to two bodied bodies of $\Lambda^2_n$. This gives $\frac{n}{2}-1+1=\frac{n}{2}$ by proposition \ref{typesVY}. 
\end{proof}

\begin{proposition}\label{braid2}
The  number of full weak exceptional collections which braid group acts nontrivially is given by the sum
\begin{align*}
\sum\limits^{\frac{3}{2}n-2}_{x=4,\, (x)_3\equiv 1}c(x,\frac{3}{2}n-x-1)
\end{align*}
\end{proposition}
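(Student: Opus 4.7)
The plan is to combine the structural results preceding this proposition: Proposition \ref{numberofbodies} restricts bones to at most two main bodies, and for two-body bones Proposition \ref{braid1} counts the braid-equivalent weak exceptional sequences by $c(x,y)$. Since one-body bones admit a unique ordering (trivial braid action), the count reduces to a sum of $c(x,y)$ over all admissible pairs of body sizes $(x,y)$.

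The total-length constraint $x+y = \frac{3n}{2}-1$ follows from Theorem \ref{thm3}. I would next prove, by induction on $n$ using the join construction in Proposition \ref{typesVY}, that every body size is congruent to $1$ modulo $3$. The base cases $n=4$ (with $(x,y)=(4,1)$) and $n=6$ (with $(x,y)\in\{(7,1),(4,4)\}$) are already enumerated in the paper. For the inductive step, each $Y$-join or $V$-join attaches exactly three new dots, and a case analysis shows that these three dots are always appended to the same body; hence the $\equiv 1 \pmod 3$ invariant on each body is preserved.

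The range $4 \leq x \leq \frac{3n}{2}-2$ then drops out: the leftmost module in the sequence (namely $[n]$ or $[n-1,n]$ up to $\sigma$) must lie inside a $Y$- or $V$-bone contained in the first body, forcing $x \geq 3$ and therefore $x \geq 4$ in view of the congruence, while non-triviality of the second body forces $y \geq 1$, i.e.\ $x \leq \frac{3n}{2}-2$. Combined with Lemma \ref{numberofbodies2} and the fact that the internal shape of each body is rigidly determined by its size, one verifies that for each admissible $x$ there is a unique two-body bone up to $\sigma$, with first body of size $x$ and second body of size $y = \frac{3n}{2}-1-x$. Applying Proposition \ref{braid1} body-wise and summing $c(x,y)$ over all such $x$ yields the stated formula.

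The principal obstacle in this plan is the inductive step identifying the body-size invariant: one has to rule out $Y$- or $V$-joins that split their three added dots between the two bodies, or that merge the two bodies into one. I expect this to reduce to a finite check of each join type, aided by the explicit pictures of bones already drawn in the cases $n=4$ and $n=6$, where the red joining edges clearly attach to a single body and dictate how the extension propagates.
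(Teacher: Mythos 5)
Your proposal is correct and follows essentially the same route as the paper: decompose the nontrivial cases into bones with exactly two main bodies (Propositions \ref{numberofbodies}, \ref{numberofbodies2}), apply the counting function $c(x,y)$ of Proposition \ref{braid1} to each bone, and sum over the body-size splits $x+y=\tfrac{3}{2}n-1$ with $x\equiv 1 \pmod 3$. The only difference is presentational: you spell out, via induction on the $Y$- and $V$-joins, the mod-$3$ body-size invariant and the one-bone-per-split bijection, facts the paper asserts more tersely from its iterative bone construction.
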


\begin{proof}
For simplicity, we substitute $n=2k$, and use \ref{c} to get:
\begin{align*}
\sum\limits^{\frac{3}{2}n-2}_{x=4,\, (x)_3\equiv 1}c(x,\frac{3}{2}n-x-1)=1-k+\sum\limits^{k-1}_{j=1}\binom{3k-2}{3j}
\end{align*}

and consider the sum over $j$:
\begin{align*}
\sum\limits^{k-1}_{j=1}c(3j+1,3k-3j-2)
\end{align*}
In proposition \ref{braid1}, we obtained a counting function for weak exceptional sequences associated to a bone. Here we consider the summation over all possible bones with two main bodies. Each connected body has $3j+1$ modules and the total is $3k-1$, hence the formula follows. 
\end{proof}

\begin{lemma}\label{eq1} We have the following equality:
\begin{align*}
\sum\limits^n_{j\equiv 0\!\!\! \mod 3} \binom{n}{j}=\cfrac{1}{3}\left(2^n+2\cos\left(\frac{n\pi}{3}\right)\right)
\end{align*}
\end{lemma}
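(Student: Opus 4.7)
The plan is to prove this by the standard roots-of-unity filter. Let $\omega = e^{2\pi i/3}$, a primitive cube root of unity. The key observation is the orthogonality identity
\begin{equation*}
\frac{1 + \omega^{j} + \omega^{2j}}{3} = \begin{cases} 1 & \text{if } 3 \mid j, \\ 0 & \text{otherwise.} \end{cases}
\end{equation*}
Multiplying this indicator against $\binom{n}{j}$ and summing over $0 \le j \le n$ gives
\begin{equation*}
\sum_{\substack{0\le j\le n\\ j\equiv 0\,(\mathrm{mod}\,3)}} \binom{n}{j} \;=\; \frac{1}{3}\sum_{j=0}^{n}\binom{n}{j}\bigl(1 + \omega^{j} + \omega^{2j}\bigr).
\end{equation*}

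Next I would collapse each of the three pieces using the binomial theorem. This yields
\begin{equation*}
\sum_{\substack{0\le j\le n\\ j\equiv 0\,(\mathrm{mod}\,3)}} \binom{n}{j} \;=\; \frac{1}{3}\bigl[(1+1)^{n} + (1+\omega)^{n} + (1+\omega^{2})^{n}\bigr] \;=\; \frac{1}{3}\bigl[2^{n} + (1+\omega)^{n} + (1+\omega^{2})^{n}\bigr].
\end{equation*}

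The remaining step is to recognize $(1+\omega)^{n} + (1+\omega^{2})^{n}$ as $2\cos(n\pi/3)$. Writing $1+\omega = 1 - \tfrac12 + i\tfrac{\sqrt3}{2} = e^{i\pi/3}$ and $1+\omega^{2} = e^{-i\pi/3}$, we get $(1+\omega)^{n} + (1+\omega^{2})^{n} = e^{in\pi/3} + e^{-in\pi/3} = 2\cos(n\pi/3)$, which completes the identity. There is no real obstacle here; the only thing to be careful about is the polar conversion $1+\omega = e^{i\pi/3}$, which is where the factor $\pi/3$ (rather than $2\pi/3$) enters the final answer.
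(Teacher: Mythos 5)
Your proof is correct and follows essentially the same route as the paper: the roots-of-unity filter with $\omega=e^{2\pi i/3}$, the binomial theorem, and the polar identification $1+\omega=e^{i\pi/3}$, $1+\omega^{2}=e^{-i\pi/3}$ giving the $2\cos(n\pi/3)$ term. No gaps to report.
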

\begin{proof}
Let $\omega=e^{\frac{2i\pi}{3}}$ be the third root of unity. Since it satisfies \begin{align*}
 1+\omega^j+\omega^{-j}=\begin{cases}
3& \text{if}\quad j\equiv 0\mod 3\\
0& \text{if}\quad j\not\equiv 0\mod 3
\end{cases}
\end{align*}
we have:
\begin{align*}
\sum\limits^n_{j\equiv 0\!\!\! \mod 3} \binom{n}{j}&=\sum\limits^n_{j=0} \binom{n}{j} \cfrac{1}{3}\left(1+\omega^j+\omega^{-j}\right) \\
&=\cfrac{1}{3}\left(2^n+(1+\omega)^n+(1+\omega^{-1})^n\right)\\
&=\cfrac{1}{3}\left(2^n+\omega^{n/2}+\omega^{-n/2}\right)\\
&=\cfrac{1}{3}\left(2^n+2\cos\left(\frac{n\pi}{3}\right)\right)
\end{align*}

\end{proof}

\begin{theorem}[Thm \ref{thm3} b]\label{niseven}
If $n=2k$, the total number of full weak exceptional sequences is:
\begin{align}
\#(\Lambda^2_{2k})=2k\left(1+\cfrac{8^k}{12}-\cfrac{(-1)^k}{3}\right)
\end{align}

\end{theorem}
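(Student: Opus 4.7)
The plan is to aggregate the structural counts from the preceding propositions, evaluate the resulting binomial sum via Lemma \ref{eq1}, and then multiply by $n=2k$ using the $\sigma$-action.

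First, I would fix an initial module of a bone (either $[n]$ of Y-type or $[n-1,n]$ of V-type) and enumerate all full weak exceptional sequences whose root is this module. By Lemma \ref{numberofbodies2}, exactly $k+1$ such bones have a single main body, and each contributes exactly one sequence since the braid group acts trivially there. The remaining $k-1$ two-body bones together contribute $1-k+\sum_{j=1}^{k-1}\binom{3k-2}{3j}$ sequences by Proposition \ref{braid2}. Adding these, the rooted total equals $2+\sum_{j=1}^{k-1}\binom{3k-2}{3j}$.

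Next, I would apply Lemma \ref{eq1} with parameter $3k-2$. A direct computation gives $\cos\!\left(\frac{(3k-2)\pi}{3}\right) = -\frac{(-1)^k}{2}$, so the full sum $\sum_{j=0}^{k-1}\binom{3k-2}{3j}$ collapses to $\frac{8^k}{12}-\frac{(-1)^k}{3}$. Subtracting the $j=0$ term $\binom{3k-2}{0}=1$ and combining with the prefactor $2$ yields the bracketed expression $1+\frac{8^k}{12}-\frac{(-1)^k}{3}$.

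Finally, I would multiply by $n=2k$ to account for the transitive $\sigma$-orbit on the choice of initial module. The main obstacle is justifying this last step: one must verify that $\sigma$ acts freely on full weak exceptional sequences, so that each unrooted sequence is obtained from exactly one rooted sequence via a unique $\sigma$-shift. This rests on the periodicity of $\sigma$ on $\cL(\Lambda^2_n)$ together with the absence of internal rotational symmetry in the bones classified by Proposition \ref{typesVY}. Once this is established, the formula
\[
\#(\Lambda^2_{2k}) = 2k\left(1+\frac{8^k}{12}-\frac{(-1)^k}{3}\right)
\]
follows immediately.
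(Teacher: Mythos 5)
Your proposal is correct and follows essentially the same route as the paper: it aggregates the $k+1$ single-body bones (Lemma \ref{numberofbodies2}) with the two-body count $1-k+\sum_{j=1}^{k-1}\binom{3k-2}{3j}$ from Proposition \ref{braid2}, evaluates the sum via Lemma \ref{eq1} at $3k-2$, and multiplies by $2k$ using the $\sigma$-action, exactly as in the paper's proof. Your added remark about verifying that $\sigma$ acts freely (the paper only invokes periodicity and transitivity) is a reasonable point of care but does not change the argument.
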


\begin{proof}

It is enough to consider exceptional collections starting with the modules $[n]$ or $[n-1,n]$. The previous result \ref{braid2} gives the number of exceptional sequences with braid group action restricted to each bone. The remaining ones, i.e. module interchange is not allowed, are $k+1$ by the lemma \ref{numberofbodies2}. Therefore we get the sum :
\begin{align*}
1+k+\sum\limits^{k-1}_{j=1}c(3j+1,3k-3j-2)
\end{align*}
Since period of $\sigma$ is $n=2k$, and the action is transitive:
\begin{align*}
\#(\Lambda^2_{2k})=2k\left(1+k+\sum\limits^{k-1}_{j=1}c(3j+1,3k-3j-2)\right)
\end{align*}
By proposition \ref{c} and lemma \ref{eq1} we can simplify it and get: 
\begin{align*}
\#(\Lambda^2_{2k})=2k\left(1+\cfrac{8^k}{12}-\cfrac{(-1)^k}{3}\right)
\end{align*}
\end{proof}

\section{Standard vs. Weak Exceptional Sequences}\label{last}

We compare some properties of exceptional sequences and weak exceptional sequences. Let $L\Lambda^2_n$ be linear radical square zero Nakayama algebra of rank $n$. Then the number of exceptional sequences is equal to the number of idempotent functions \cite{sen19} which is $\sum\limits^n_{j=1}\binom{n}{j}j^{n-j}$. By simple computation we have:
\begin{align*}
\lim\limits_{n\rightarrow\infty} \cfrac{\#(\Lambda^2_{n})}{\#(L\Lambda^2_n)}=0 &\quad \text{and}\quad\lim\limits_{n\rightarrow\infty} \cfrac{\#(\mathbb{A}_n)}{\#(\Lambda^n_n)}=0
\end{align*}

The first limit is zero, since in the denominator, there is a summand of growth rate $k^k$. Moreover the full collection of $\Lambda^2_n$ is of size $\frac{3}{2}n-1$ which is greater than $n$ for large $n$. We do not know a closed formula for weak exceptional sequences of $\Lambda^2_n$ of fixed size smaller than the full size. \\

Recall that an exceptional sequence $E=(M_1,\ldots,M_k)$ is called orthogonal \cite{ringel1994braid} if $\Hom(M_i,M_j)=0$ for $i\neq j$ . It can be verified that the full weak exceptional sequences of type $\Lambda^2_n$ are not orthogonal, which is different than the exceptional sequences of radical square zero Nakayama algebras \cite{sen19}. Because there is an exceptional sequence of simple modules in $L\Lambda^2_n$ which is orthogonal collection. 

The following proposition and its corollary explains why we consider weak exceptional sequences of selfinjective Nakayama algebras:

\begin{proposition}\label{weakvs} Let $\Lambda_n$ be a cyclic connected Nakayama algebra of rank $n$. An indecomposable  $\Lambda$-module is exceptional in the sense of \ref{standar} if and only if it is not periodic module and the length is smaller than or equal to $n$.
\end{proposition}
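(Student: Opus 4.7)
The plan is to prove both directions, leveraging that over any Nakayama algebra every indecomposable module $M$ is uniserial and uniquely determined by its top $S_i$ and its length $\ell(M)$, together with the well-known syzygy recursion: for non-projective $M$ with top $S_i$ and length $\ell < c_i$ (where $c_i$ is the length of $P_i$ in the Kupisch series), $\Omega M$ is uniserial with top $S_{i+\ell \bmod n}$ and length $c_i - \ell$.

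For $(\Rightarrow)$, suppose $M$ is exceptional. If $M$ were $\Omega$-periodic of period $p\ge 1$, then $M$ would be non-projective and $\Omega^p M\cong M$; since the identity of an indecomposable non-projective module never factors through a projective, it represents a nonzero class of $\ext^p(M,M)$, contradicting exceptionality. For the length bound, assume by contrapositive $\ell(M) = n+r$ with $r\ge 1$. The cyclic indexing of composition factors of the uniserial $M$ forces a proper quotient $M'$ of length $r$ that is simultaneously a submodule of $M$: both realizations have top $S_i$ and socle $S_{i+r-1}$. The composition $M\twoheadrightarrow M' \hookrightarrow M$ is a non-scalar endomorphism, so $\dim_k\Hom(M,M)\ge 2$, contradicting $\Hom(M,M)\cong k$.

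For $(\Leftarrow)$, assume $M$ is not $\Omega$-periodic and $\ell(M)\le n$. The converse of the fold argument gives $\Hom(M,M)\cong k$: no composition factor of $M$ repeats, so the only subquotient of $M$ sharing both its top and its socle is $M$ itself. To obtain $\ext^i(M,M)=0$ for all $i\ge 1$, I will invoke the dichotomy available for cyclic Nakayama algebras: every indecomposable is either $\Omega$-periodic or has finite projective dimension. Non-periodicity of $M$ therefore gives $\Omega^d M=0$ for some $d$, killing $\ext^i(M,M)$ for $i\ge d$. For $1\le i<d$, the recursion lets me track the top-index and length of each $\Omega^j M$, and the core combinatorial claim I must establish is: a nonzero stable map $\Omega^j M \to M$ in a cyclic Nakayama algebra forces $\Omega^{j'}M \cong M$ for some $j'\ge 1$, contradicting non-periodicity.

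The main obstacle is precisely this last reduction. Proposition \ref{homextortho} provides the Hom/Ext-support regions only for the self-injective algebras $\Lambda^k_n$, so for general $\Lambda_n$ with arbitrary Kupisch series $(c_1,\ldots,c_n)$ I will have to either generalize the lattice picture or argue directly from the syzygy recursion. The delicate point is deciding which maps are stably zero, since the candidate factorizations change with the projective covers encountered along the $\Omega$-orbit; showing that the only way to realize a nonzero $\ext^i(M,M)$ is to have the $\Omega$-orbit of $M$ close back onto $M$ is where the real work lies.
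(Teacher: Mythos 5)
Your necessity direction is sound and is essentially the paper's own argument: $\Omega^p M\cong M$ with $M$ non-projective gives a nonzero class in $\ext^p(M,M)$, and for $l(M)>n$ your fold $M\twoheadrightarrow M'\hookrightarrow M$ is exactly the paper's criterion that $\dim_k\Hom(M,M)\geq 2$ precisely when the top of $M$ recurs in $\mathrm{rad}\,M$. The gap is in the sufficiency direction. The dichotomy you invoke --- every indecomposable over a cyclic Nakayama algebra is either $\Omega$-periodic or of finite projective dimension --- is false. Take $n=3$ with Kupisch series $(2,3,3)$, i.e.\ $l(P_1)=2$, $l(P_2)=l(P_3)=3$. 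Then $\Omega S_1=S_2$, $\Omega S_2=\mathrm{rad}\,P_2$ (uniserial of length $2$ with top $S_3$), and $\Omega(\mathrm{rad}\,P_2)=\mathrm{soc}\,P_3=S_2$; so the syzygies of $S_1$ cycle forever between $S_2$ and $\mathrm{rad}\,P_2$ without ever returning to $S_1$ or vanishing. Hence $S_1$ is non-periodic of infinite projective dimension, and your step ``non-periodicity gives $\Omega^d M=0$'' fails; the true statement for Nakayama algebras is only that syzygies are \emph{eventually} periodic, so you cannot truncate the problem at a finite homological degree in the way proposed.

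Second, even granting that the $\Omega$-orbit of $M$ is a finite set, the step you yourself flag as ``the real work'' is precisely the content of the sufficiency direction and is not supplied; moreover the reduction is aimed at the wrong target. Since $\Lambda_n$ need not be selfinjective, $\ext^i(M,M)$ is the quotient of $\Hom(\Omega^i M,M)$ by those maps that extend along the specific embedding $\Omega^i M\hookrightarrow P(\Omega^{i-1}M)$, and this quotient surjects onto the stable Hom-space; therefore showing that every map $\Omega^i M\to M$ is \emph{stably} zero (which is all that ``a nonzero stable map forces periodicity'' could yield) does not prove $\ext^i(M,M)=0$. What must be shown is that each such map extends to the relevant projective cover, an argument tracking tops, socles and the Kupisch series along the $\Omega$-orbit that your proposal does not carry out. (For comparison, the paper's own proof records only the necessity direction together with the Hom-dimension criterion; but as it stands your text asserts a false lemma and leaves the key vanishing statement unproved, so the sufficiency direction remains open in your write-up.)
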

\begin{proof}
If a module is periodic, then $X\cong\Omega^j(X)$ for some $j$, hence there is nontrivial higher extension $\ext^{j}(X,X)$. And $\dim\Hom(X,X)\geq 2$ if and only if $top(X)$ appears in the composition series of radical of $X$. 
\end{proof}

\begin{corollary} A selfinjective Nakayama algebra cannot have a standard nonprojective exceptional module.
\end{corollary}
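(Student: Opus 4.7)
The plan is to combine Proposition \ref{weakvs} with the classical observation that every nonprojective indecomposable module over a selfinjective Nakayama algebra is $\Omega$-periodic; the corollary then follows essentially in two lines.

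First I would reduce to the connected case, and recall that a connected selfinjective Nakayama algebra is either semisimple (in which case every indecomposable is simple and projective, and there is nothing to prove) or of the form $\Lambda^k_n$ with $k\geq 2$, which falls within the hypothesis of Proposition \ref{weakvs}. In this setting the rank is $n$ and all indecomposable modules have length at most $k$, so the length condition $l(X)\leq n$ plays no role; the substantive content of Proposition \ref{weakvs} for us is the periodicity criterion.

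Next I would verify that for $\Lambda^k_n$ the syzygy functor $\Omega$ restricts to a map from indecomposable nonprojective modules to indecomposable nonprojective modules. This is immediate from the uniserial structure: the kernel of a projective cover of an indecomposable Nakayama module is again uniserial, hence indecomposable, and by selfinjectivity it is projective precisely when the original module is projective. Since $\Lambda^k_n$ is representation finite, $\Omega$ then acts as a permutation on the finite set of isomorphism classes of indecomposable nonprojective modules, and every orbit is finite; hence each such $X$ satisfies $\Omega^j(X)\cong X$ for some $j\geq 1$, i.e. $X$ is periodic.

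Finally, apply Proposition \ref{weakvs} in the contrapositive direction: any module satisfying the standard exceptional condition must be non-periodic, so by the previous paragraph it cannot be an indecomposable nonprojective module. This is exactly the statement of the corollary. There is no real obstacle here—the argument is essentially a direct invocation of Proposition \ref{weakvs}—and the only bookkeeping point is the verification that the syzygy of an indecomposable nonprojective Nakayama module is again of that form, which is visible directly from the Kupisch/uniserial description.
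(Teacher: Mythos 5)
Your argument is correct and follows the same two-step route as the paper: establish that every nonprojective indecomposable over a selfinjective Nakayama algebra is $\Omega$-periodic, then apply Proposition \ref{weakvs} in the contrapositive. The only real difference is that the paper simply cites \cite{fidim} for the fact that a Nakayama algebra is selfinjective if and only if every nonprojective module is periodic, whereas you prove the relevant direction inline via the syzygy; this makes the corollary self-contained, at the cost of one small imprecision. Namely, the inference ``representation finite, hence $\Omega$ acts as a permutation'' is not automatic: a self-map of a finite set need not be a permutation, and without injectivity an orbit need only be eventually periodic, which would not give $\Omega^j(X)\cong X$ for the module $X$ you started with. Injectivity does hold here, but you should say why: either because for a selfinjective algebra the cosyzygy is inverse to $\Omega$ on the stable category (equivalently, $X\cong P/\Omega X$ where $P$ is the injective envelope of $\Omega X$, so $\Omega X$ determines $X$), or directly from the uniserial picture, where $\Omega$ sends the indecomposable of length $l<k$ with prescribed top to the indecomposable of length $k-l$ with prescribed socle, visibly a bijection on nonprojective indecomposables. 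With that one sentence added, your proof is complete and slightly more self-contained than the paper's.
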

\begin{proof}
A Nakayama algebra is self injective if and only if every nonprojective module is periodic \cite{fidim}. Therefore, these modules cannot be exceptional by \ref{weakvs}. 
\end{proof}

Now, we give an example of the full weak collection whose subcollections are standard. Let $\Lambda$ be Nakayama algebra given by the relations $\alpha_3\alpha_2\alpha_1=0$ and $\alpha_1\alpha_3=0$. Since $\bm\varepsilon(\Lambda)$ is self injective \cite{syz}, every indecomposable $\Lambda$-module except 
$\begin{vmatrix} 1\end{vmatrix}, \begin{vmatrix}
2\\3\end{vmatrix}$ (since they correspond to simple mod-$\bm\varepsilon(\Lambda)$) are exceptional by proposition \ref{weakvs}. In particular, this implies that there is no orthogonal complete exceptional sequence. The sequence \begin{align}
\left(\begin{vmatrix}2\end{vmatrix},\begin{vmatrix} 1\\2\end{vmatrix},\begin{vmatrix} 3\\1\end{vmatrix},\begin{vmatrix} 3\end{vmatrix}\right)
\end{align}

in the bone structure \ref{bone} is weak exceptional, and the subsequences: $\left(\begin{vmatrix}2\end{vmatrix},\begin{vmatrix} 1\\2\end{vmatrix},\begin{vmatrix} 3\\1\end{vmatrix}\right)$, $\left(\begin{vmatrix} 1\\2\end{vmatrix},\begin{vmatrix} 3\\1\end{vmatrix},\begin{vmatrix} 3\end{vmatrix}\right)$ are exceptional, since $\ext^2_{\Lambda}(\vert 3\vert,\vert 2\vert)$ is nontrivial.

\bibliographystyle{alpha}
\newcommand{\etalchar}[1]{$^{#1}$}

\end{document}